\address{ }
\newcommand{\MyScale}{4}
\newcommand{\pd}[2]{\dfrac{\partial#1}{\partial#2}}
\DeclareMathOperator{\arcosh}{arcosh}
\newcommand{\R}{\mathbb{R}}
\newcommand{\So}{\mathbb{S}}
\newcommand{\Ho}{\mathbb{H}}
\newcommand{\x}{\lVert x \rVert}
\newcommand{\ve}{\varepsilon}
\newcommand{\pone}{\varphi_1}
\newcommand{\pIP}{\varphi_{\text{IP}}}
\newcommand{\sff}{\mathsf{f}}
\newcommand{\sfh}{\mathsf{h}}
\newcommand{\one}{A}
\newcommand{\two}{B}
\newcommand{\three}{C}
\newcommand{\four}{D}
\theoremstyle{remark}
\newtheorem{theorem}{Theorem}[section]
\newtheorem{lemma}[theorem]{Lemma}
\newtheorem{prop}[theorem]{Proposition}
\newtheorem{cor}[theorem]{Corollary}
\begin{document}

\title[Fundamental gap of convex domains in $\mathbb H^n$]
{The vanishing of the fundamental gap of convex domains in $\mathbb H^n$ }

\author[T.~Bourni]{Theodora~Bourni}
\address{Department of Mathematics\\ University of Tennessee\\1403 Circle Dr, Knoxville, TN 37916 USA}
\email{tbourni@utk.edu}

\author[J.~Clutterbuck]{Julie~Clutterbuck}
\address{School of Mathematics \\
Monash University \\
9 Rainforest Walk,
VIC 3800 Australia}
\email{Julie.Clutterbuck@monash.edu}

\author[X.~H.~Nguyen]{Xuan~Hien~Nguyen}
\address{Department of Mathematics \\ Iowa State University \\ 411 Morrill Rd, Ames, IA 50011 USA}
\email{xhnguyen@iastate.edu}

\author[A.~Stancu]{Alina~Stancu}
\address{Department of Mathematics and Statistics \\ Concordia University\\ 1455 Blvd. de Maisonneuve Ouest \\ Montreal, QC, H3G 1M8 Canada}
\email{alina.stancu@concordia.ca}

\author[G.~Wei]{Guofang~Wei}
\address{Department of Mathematics \\ UC  Santa Barbara\\ Santa Barbara, CA 93106 USA}
\email{wei@math.ucsb.edu}

\author[V.~Wheeler]{Valentina-Mira~Wheeler}
\address{Valentina-Mira Wheeler \\ Institute for Mathematics and its Applications \\ University of Wollongong\\ Northfields Avenue\\ Wollongong, NSW 2522 Australia}
\email{vwheeler@uow.edu.au}

\thanks{The research of Julie Clutterbuck was supported by grant FT1301013 of the Australian Research Council. The research of Xuan Hien Nguyen was supported by grant 579756 of the Simons Foundation. The research of Alina Stancu was supported by NSERC Discovery Grant  RGPIN 327635. The research of Guofang Wei was supported by NSF Grant DMS 1811558. The research of Valentina-Mira Wheeler was supported by grants DP180100431 and DE190100379 of the Australian Research Council.}

\maketitle

\begin{abstract}
For the Laplace operator with Dirichlet boundary conditions on convex domains in $\mathbb H^n$, $n\geq 2$, we prove that the product of the fundamental gap with the square of the diameter can be arbitrarily small for domains of any diameter.
\end{abstract}



\section{Introduction}
We consider the low eigenvalues of the Laplace operator $- \Delta$ with Dirichlet boundary conditions on a convex, compact domain $\Omega$ of $\mathbb H^n$. This operator has a discrete spectrum with $\infty$ as its accumulation point. If the sequence of eigenvalues is arranged in increasing order $\lambda_1 < \lambda_2 \leq \lambda_3 \leq \cdots$, the \emph{fundamental gap} is the difference between the first two eigenvalues
  \[
     \lambda_2 - \lambda_1>0.
  \]

  A lot of work has been done in Euclidean and spherical spaces. In 2011, Andrews and Clutterbuck showed that on a convex domain in $\mathbb R^n$ with Dirichlet boundary condition, $\lambda_2 - \lambda_1 \ge 3\pi^2/D^2$, where $D$ is the diameter of the domain \cite{fundamental}.  The result is sharp, with the limiting case being rectangles that collapse to a line.    We refer to this paper for history and earlier work on this important subject, see also the survey article \cite{daifundamental}. More recently, Dai, He, Seto, Wang, and Wei (in various subsets) \cite{seto2019sharp,dai2018fundamental,he2017fundamental} generalized  the fundamental gap estimate to convex domains in $\mathbb S^n$, showing that the same bound holds:  $\lambda_2 - \lambda_1 \ge 3\pi^2/D^2$.

  Not much was known in the case of hyperbolic spaces. In a previous paper \cite{BCNSWW}, the authors first found an example of a convex domain in $\mathbb H^2$ in which the above lower bound is breached, thereby raising the question of estimating the fundamental gap for convex domains with small diameter. It is reasonable to believe that, as the diameters get smaller, the distortion from the metric becomes negligible and one would get a lower bound for the fundamental gap in terms of the diameter approaching $3 \pi^2/D^2$, the bound for Euclidean space, from below. The main result of this paper is the construction of explicit examples showing that, on the contrary, for any diameter, there is no lower bound on the gap.

 \begin{theorem}
	\label{thm:main}
	In hyperbolic spaces $\mathbb H^n$, $n\geq 2$, for any  constants $\ve>0$, $D>0$, there is a convex domain $\Omega$ with diameter $D$ whose fundamental gap satisfies
	\[
	\lambda_2(\Omega) - \lambda_1(\Omega) < \frac{\ve \pi^2}{D^2}.
	\]
\end{theorem}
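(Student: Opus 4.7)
The plan is to construct, for any fixed $D>0$ and $\varepsilon>0$, an explicit convex domain $\Omega\subset\mathbb{H}^n$ of diameter $D$ whose first two Dirichlet eigenvalues are very close.  The mechanism I would try to exploit is that in negative curvature, the convex hull of two well-separated convex sets pinches to an exponentially thin waist because geodesics diverge exponentially; such a ``hyperbolic dumbbell'' should then force $\lambda_1,\lambda_2$ to be close by a standard tunnelling argument.

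I would begin in $\mathbb{H}^2$ and take $\Omega_0=\mathrm{conv}\bigl(B(p_1,r)\cup B(p_2,r)\bigr)$, the convex hull of two geodesic disks of equal radius $r$ whose centres lie at hyperbolic distance $d$.  A Lambert quadrilateral computation (right angles at the midpoint $m$ of $\overline{p_1p_2}$, at the foot $m'$ of the common perpendicular of the external tangent geodesic with $\overline{p_1p_2}$, and at the tangent point on $\partial B(p_1,r)$) gives the half-width of $\Omega_0$ at $m$ as $h$ with $\sinh h=\sinh r/\cosh(d/2)$, which is exponentially small in $d$.  For $n\geq 3$ I would rotate $\Omega_0$ about the geodesic through $p_1,p_2$ using the warped-product structure of $\mathbb{H}^n$ around a geodesic, obtaining a rotationally symmetric convex domain $\Omega\subset\mathbb{H}^n$ with a reflection symmetry $\sigma$ exchanging the two bulbs.

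For the gap, I would argue by the min-max principle.  Because $\Omega$ is symmetric under $\sigma$, the eigenspaces split into $\sigma$-even and $\sigma$-odd pieces, so $\varphi_1$ lies in the even subspace while $\varphi_2$ lies in the odd subspace once the gap is small.  Taking cut-offs $\phi_1,\phi_2$ of the Dirichlet ground states on each bulb and putting $\chi_\pm=\phi_1\pm\phi_2$, the Rayleigh quotients $R(\chi_\pm)$ differ only by the overlap across the thin neck; this overlap is controlled by the large transverse Dirichlet eigenvalue $\sim (\pi/h)^2$ in the neck, and is tunnelling-small in $1/h$.  The parameters $r,d$ can then be tuned so that $\mathrm{diam}(\Omega)\approx d+2r$ equals the prescribed $D$.

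The main obstacle I anticipate is achieving the bound \emph{uniformly in} $D$.  The dumbbell effect is manifestly strong only once $d$ is at least of order the curvature scale, which forces $D$ to be not too small; for small $D$ one must delicately exploit the $\cosh(d/2)$ factor in the Lambert relation (or a genuinely different construction) to drive $h$ to zero fast enough compared to the diameter, so that $(\lambda_2-\lambda_1)D^2$ can be made smaller than $\varepsilon\pi^2$ for every $D>0$ rather than only for large $D$.
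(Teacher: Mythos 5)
Your family of domains and the paper's are close relatives. The paper works with $\Omega_{\sqrt\mu,L}=\{1\le r\le e^{\pi/\sqrt\mu},\,|\varphi|\le L\}$, which in Fermi coordinates about the $y$-axis geodesic is a thin ``rectangle'' of length $\pi/\sqrt\mu$ whose cross-section widens from the core to the edges by the \emph{fixed} factor $\sec L$; your convex hull of two balls with $d\to D$ and $r\to 0$ is the same kind of object (and your Lambert relation $\sinh h=\sinh r/\cosh(d/2)$, hence the width profile $\sinh w(s)=\sinh r\cosh s/\cosh(d/2)$, is correct). The decisive structural difference is that the paper's domain separates variables, reducing everything to the one-dimensional problem $h_{\varphi\varphi}+\lambda\sec^2\varphi\,h=\mu h$, where the double-well structure, the exponential smallness of the ground state at the waist, and the Rayleigh-quotient difference for an odd test function can all be computed by hand. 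Your domain admits no such reduction.

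The genuine gap is the phrase ``standard tunnelling argument.'' Once the diameter is pinned at $D$, the only remaining limit is $r\to 0$ with $d\to D$, and then $h/r\to 1/\cosh(D/2)$: the neck is a \emph{fixed fraction} of the bulb width, for every $D$, large or small. So the classical dumbbell picture you invoke --- neck negligible relative to fixed bulbs, $\lambda_k(U_\epsilon)\to\lambda_k(U)$ --- never applies to your family (the paper makes exactly this caveat about its own domains in its heuristic section), and the overlap is not ``tunnelling-small in $1/h$'' by itself. What actually kills the gap is that the effective transverse barrier, of order $h^{-2}-r^{-2}\sim r^{-2}\left(\cosh^2(D/2)-1\right)$, diverges as $r\to0$ while its width stays $\approx D$; turning that into a rigorous splitting estimate for a genuinely $n$-dimensional domain without separation of variables is a real piece of semiclassical/quantum-waveguide analysis that your proposal leaves entirely open. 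It is precisely the content of the paper's Sections 4--6: the two-sided bound $\cos^2L\le\lambda_1/\mu\le\cos^2\eta$, the Sturm-comparison lower bound $h_1(\varphi)\ge h_1(0)\cosh(\sqrt{\mu c_1}\,\varphi)$ forcing $h_1(0)^2\le C e^{-\sqrt{\mu c_1}\,\varphi_0/2}$, and the estimate of $R[\psi h_1]-R[h_1]$ on the interval $|\varphi|\le\varphi_0/\mu$. Two smaller points: the obstacle you single out (small $D$) is not the real one, since the fixed neck-to-bulb ratio afflicts every $D$; and you do not get to assume the second eigenfunction is $\sigma$-odd --- you only need, and should only use, the cheap min--max direction $\lambda_2\le R[\text{odd test function}]$, which is how the paper proceeds with $\psi h_1$.
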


From the discussion above, Theorem \ref{thm:main} shows that the behavior of the fundamental gap in hyperbolic spaces is drastically different from $\mathbb R^n$ and sphere cases. We explain the intuition behind the phenomenon in Section \ref{sec:discussion}.  We further remark that the quantity $D^2 (\lambda_2-\lambda_1)$ is invariant under the scaling of the metric. Hence the same result also holds for any simply connected negative constant curvature space forms.

For hyperbolic spaces, many explicit estimates on the upper and lower bounds of the first eigenvalue exist, see e.g. \cite{McKean1970, Gage1980, Savo2009, Artamoshin2016}. For the fundamental gap Benguria and Linde \cite{Benguria-Linde2007} obtained a beautiful upper bound for any open bounded domain $\Omega \subset \mathbb H^n$. Namely, the gap $\lambda_2(\Omega)- \lambda_1(\Omega) \le \lambda_2(B_\Omega)- \lambda_1(B_\Omega)$, where $B_\Omega$ is a ball in $\mathbb H^n$ such that $\lambda_1(B_\Omega) =  \lambda_1(\Omega)$. Our result gives new insight on the gap of convex domain in hyperbolic spaces.

Our work here draws strongly on work of Shih  \cite{shih1989counterexample}, who constructed a domain in $\mathbb{H}^2$ with a first eigenfunction that is not log-concave.  Shih's result highlights another difference from the situation in Euclidean cases, where the first Dirichlet eigenfunction is always log-concave \cite{MR0450480}.  Log concavity implies that the superlevel sets of the eigenfunction are convex.     We use domains similar to the ones in \cite{shih1989counterexample} and find that the first eigenfunction has two distinct maxima.     This means that the superlevel sets are very far from being convex:  they are not even connected.
 Thus, this article also gives a simpler proof of the existence of domains where the first eigenfunction is not log-concave.

 The organization of this paper is as follows.  We begin in dimension 2 for simplicity and because most of the insight can be garnered here. In Section \ref{sec:domain}, we explicitly construct the domain for the example, and describe its shape and diameter.  In Section \ref{sec:approach}, we sketch the main strategy.     In Section  \ref{sec:eigenvalues}, we make estimates on the first eigenvalue of the domain.    In Section \ref{sec:shape-of-h}, we describe precisely  the way in which the first eigenfunction is very small in the middle of the domain.   In Section \ref{sec:Rayleigh-quotient}, we show that the gap goes to zero.
In Section \ref{sec:discussion}, we give a heuristic explanation for the phenomenon with reference to a simple case in $\mathbb{R}^2$.  The generalization to higher dimensions is left until Section \ref{sec:higher-dimensions}.

\subsection*{Acknowledgements} This research originated at  the  workshop ``Women in Geometry 2" at the  Casa Matem\'atica Oaxaca (CMO)  from June 23 to June 28, 2019.  We would like to thank CMO-BIRS for creating the opportunity to start work on this problem through their support of the workshop.     Julie Clutterbuck thanks Ben Andrews for a useful discussion about the existence of a double-peaked eigenfunction.


\section{The domain}
\label{sec:domain}

Let $\mathbb H^2$ be the hyperbolic space modeled by the Poincar\'e half-plane $\{(x,y) \mid y>0\} = \{ (r, \varphi) \mid r>0, \varphi \in (-\pi/2, \pi/2)\}$ with the metric $g = ds^2 = \frac{dx^2 + dy^2}{y^2}$. Note that the coordinates $(r, \varphi)$ are not standard polar coordinates and are related to $(x,y)$ by $x= r \sin \varphi$, $y = r \cos \varphi$.

Let our domain be given by
  \[
  \Omega_{\sqrt{\mu}, L}:=\lbrace (r,\varphi):   1\le r\le e^{\pi/\sqrt{\mu}}, -L\le \varphi\le L \rbrace,
  \]
  where we start with an arbitrary fixed $L<\pi/2$, but will choose a suitable (large) positive $\mu$ later.

  For easier reference, we label some points of our domains (see Figure \ref{fig:OmegaDomain})
\begin{align*} P&=(\sin L, \cos L), & Q&=e^{\pi/\sqrt \mu}(\sin L, \cos L), & R&=e^{\pi/\sqrt \mu}(-\sin L, \cos L),  \\
  S&=(-\sin L, \cos L), & T &= (0,1), & U&=(0, e^{\pi/\sqrt \mu}).
\end{align*}

\begin{figure}[htbp]
    \begin{tikzpicture}[line cap=round,line join=round,scale=\MyScale]
    \fill[draw=blue, fill=blue!40]
    ({cos(30)}, {sin(30)})--({1.1*cos(30)},{1.1*sin(30)}) arc(30:150:1.1) -- ( {cos(150)}, {sin(150)}) arc(150:30:1);
\draw[thick,->] (-1.3,0)--(1.3,0); 
\draw[thick,->] (0,0)--(0,1.5); 
\draw [line width=2pt,color=blue] ({cos(30)}, {sin(30)}) node[below]{$P$}--({1.1*cos(30)},{1.1*sin(30)}) node[below, right]{$Q$}; 
\draw [line width=2pt,color=blue]( {cos(150)}, {sin(150)}) node[below]{$S$}--({1.1*cos(150)},{1.1*sin(150)}) node[below, left]{$R$}; 
\draw [line width=2pt,color=blue,domain=30:150] plot ({cos(\x)}, {sin(\x)});
\draw [line width=2pt,color=blue,domain=30:150] plot ({1.1*cos(\x)}, {1.1*sin(\x)});

\node[anchor=center] at (0.2,0.58) {\(L\)};
\draw[densely dashed, thick, ->] (0,0.5)  arc (90:30:0.5); 
\draw[densely dashed, thick, -] (0,0) --({1.1*cos(30)}, {1.1*sin(30)}); 

\node[anchor=east, blue] at (0, 0.9) {$T$};
\node[anchor=east, blue] at (0, 1.2) {$U$};
    \end{tikzpicture}%
  \caption{Domain $\Omega_{\sqrt{\mu}, L}= \{(r,\varphi) \mid 1 < r< e^{\pi/\sqrt{\mu}},\ -L< \varphi<L\}
$.}
\label{fig:OmegaDomain}
\end{figure}
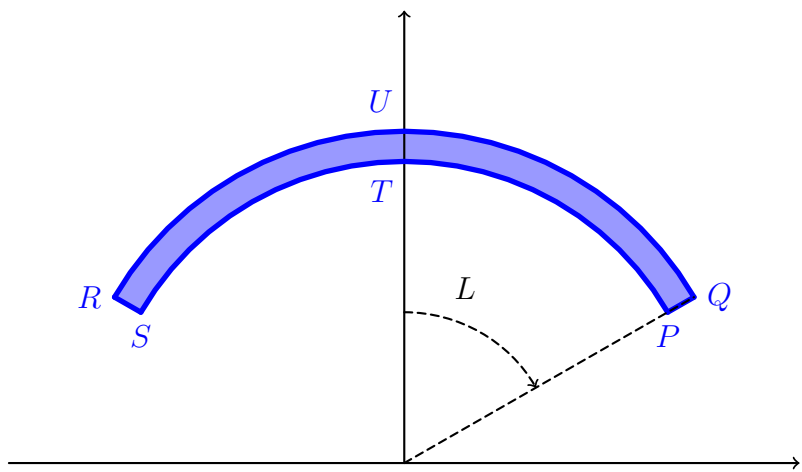

In an earlier paper  studying the fundamental gap \cite{BCNSWW}, the authors considered a similar domain $\Omega_{c, \theta_0, \theta_1}$ that may not be symmetric with respect to the geodesic $\varphi =0$. The domains of the two papers differ by a
 slight change of coordinates in which the new variables are:
	\[
	  \varphi = \frac{\pi}{2} - \theta, \quad L = \frac{\pi}{2} - \theta_{\ast}, \quad \mu=c^2,
	\]
	where $\theta_* = \min \{\theta_0, \pi - \theta_1\}$.

 Lemmas 3.3 and 4.3 in our earlier paper \cite{BCNSWW}  showed that the gap of $\Omega_{\sqrt \mu, L}$ goes to $\frac{3 \pi^2}{D^2}$ when $\mu$ is fixed and $L$ goes to zero. Those convex domains are thin strips along a segment of $y$-axis.  In this paper we will focus on the convex domains with $L$ fixed  and  $\mu \to \infty$, namely thin strips along part of the upper unit circle as in Figure \ref{fig:OmegaDomain}.

\subsection{The diameter}

From Proposition 4.1 of \cite{BCNSWW}, we have that the diameter $D_{\sqrt \mu, L}$ of $\Omega_{\sqrt \mu, L}$ is given by
\[D_{\sqrt \mu, L}= \max \{\textrm{dist}(P,Q), \textrm{dist}(P, R),  \textrm{dist}(R, S)\}. \]
Since this domain is symmetric  with respect to $\varphi=0$, we have $\textrm{dist}(R, S)=\textrm{dist}(P,Q)$.   Hence we conclude that:

\begin{lemma}
  On a domain $\Omega_{\sqrt \mu, L}$, the diameter is realized on the geodesic joining $P$ and $R$.
\end{lemma}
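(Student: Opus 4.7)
The plan is to combine the statement of Proposition~4.1 of \cite{BCNSWW} (which identifies $D_{\sqrt{\mu},L}$ as the maximum of the three candidate distances $\operatorname{dist}(P,Q)$, $\operatorname{dist}(P,R)$, $\operatorname{dist}(R,S)$) with the symmetry of $\Omega_{\sqrt{\mu},L}$ about the geodesic $\varphi=0$. The reflection $(x,y)\mapsto(-x,y)$ is an isometry of $\mathbb H^2$ that exchanges $P\leftrightarrow S$ and $Q\leftrightarrow R$, so $\operatorname{dist}(R,S)=\operatorname{dist}(P,Q)$. It therefore suffices to establish the single inequality
\[
   \operatorname{dist}(P,R)\;\ge\;\operatorname{dist}(P,Q).
\]

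For the actual comparison I would use the Poincar\'e half-plane distance formula
\[
   \cosh \operatorname{dist}\bigl((x_1,y_1),(x_2,y_2)\bigr)\;=\;1+\frac{(x_2-x_1)^2+(y_2-y_1)^2}{2\,y_1 y_2}.
\]
Write $\rho:=e^{\pi/\sqrt{\mu}}$ for brevity. A direct substitution using the coordinates $P=(\sin L,\cos L)$, $Q=\rho(\sin L,\cos L)$, $R=\rho(-\sin L,\cos L)$ gives
\[
   \cosh \operatorname{dist}(P,Q)=1+\frac{(\rho-1)^2}{2\rho\cos^2 L},
\]
\[
   \cosh \operatorname{dist}(P,R)=1+\frac{\sin^2 L\,(\rho+1)^2+\cos^2 L\,(\rho-1)^2}{2\rho\cos^2 L}.
\]
Subtracting these, the inequality $\operatorname{dist}(P,R)\ge \operatorname{dist}(P,Q)$ is equivalent to
\[
   \sin^2 L\,\bigl[(\rho+1)^2-(\rho-1)^2\bigr]\;\ge\;0,
\]
and the bracket factors as $4\rho>0$, so the inequality holds for every $L\in(0,\pi/2)$ and every $\mu>0$, with equality only in the degenerate case $L=0$.

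This reduces the lemma to the one-line algebraic check above. I do not expect a genuine obstacle; the only slightly delicate step is making sure the candidate set from Proposition~4.1 of \cite{BCNSWW} does indeed collapse, via symmetry, to just $\operatorname{dist}(P,Q)$ versus $\operatorname{dist}(P,R)$, after which the half-plane distance formula finishes the argument. If desired, one can also give a quick geometric sanity check: the geodesic $\gamma_{PR}$ is a Euclidean semicircle perpendicular to the $x$-axis whose endpoints straddle the origin, so it must dip down close to the unit circle and then climb to $R$, covering more hyperbolic length than the geodesic $\gamma_{PQ}$ from $P$ to the point $Q$ directly ``above'' it along the same Euclidean ray.
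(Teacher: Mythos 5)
Your proof is correct and follows essentially the same route as the paper's: reduce to the three candidate distances via Proposition~4.1 of \cite{BCNSWW}, use the reflection symmetry $(x,y)\mapsto(-x,y)$ to identify $\operatorname{dist}(P,Q)$ with $\operatorname{dist}(R,S)$, and then compare the two remaining distances using the explicit half-plane distance formula. The paper simply writes both quantities in the $\arcosh$ form and notes that the second argument exceeds the first; your subtraction yielding $\sin^2 L\,\bigl[(\rho+1)^2-(\rho-1)^2\bigr]=4\rho\sin^2 L>0$ is the same computation.
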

\begin{proof}
We recall that distances in hyperbolic half-plane Poincar\'e model  are given by
\begin{align}
    \text{dist}\left( (x_1,y_1),(x_2,y_2) \right) &=
    \label{eq:distance}
    \arcosh \left( \frac{(x_1^2+y_1^2) + (x_2^2+y_2^2) - 2x_1x_2}{2 y_1y_2}\right).
      \end{align}

Thus,
\begin{align*}
  \textrm{dist}(R, S)&= \arcosh\left( \frac{1+e^{2 \pi/\sqrt \mu}-2 e^{\pi/\sqrt{\mu}}(\sin L)^2}{2 e^{\pi/\sqrt{\mu}}(\cos L)^2}\right) \\
  \textrm{dist}(P,R)&= \arcosh\left( \frac{1+e^{2 \pi/\sqrt \mu}+2 e^{\pi/\sqrt{\mu}} (\sin L)^2}{2 e^{\pi/\sqrt{\mu}}(\cos L)^2}\right)
\end{align*}
and, since the argument of the latter is strictly greater than the argument of the former, and $\arcosh$ is increasing, the diameter must be realized on the geodesic joining $P$ and $R$.
\end{proof}

We emphasize that Figure \ref{fig:OmegaDomain} may be deceiving as in $\mathbb{H}^2$, the distance from $T$ to $U$ is smaller than the distance from $P$ to $Q$ (or  that from $R$ to $S$). Indeed, using the formula for distance  \eqref{eq:distance}, the inequalities $|\sinh x| \geq |x|$ and $\arcosh(x^2+1) \geq \sqrt 2 x$, we get
  \begin{align}
 \notag \textrm{dist}(R, S)
  & =  \arcosh\left( \frac{(e^{\pi/\sqrt \mu}-1)^2}{2e^{\pi/\sqrt \mu}} \frac{1}{(\cos L)^2} + 1 \right) \\
\notag & = \arcosh \left( 2 \left(\sinh \left( \tfrac{\pi}{2 \sqrt \mu} \right) \right)^2\frac{1}{(\cos L)^2} +1 \right)\\
\label{eq:neck-size}& \geq \frac{1}{\cos L} \frac{\pi}{\sqrt \mu} = \frac{1}{\cos L}\, \textrm{dist}(T,U).
\end{align}

Finally, we remark that, for all $\mu$ larger than a fixed constant $\mu_2$, we can bound the diameter in terms of $L$.

\begin{prop}
\label{prop:diameter-bounds}Given any  positive constant $\mu_2$, for all $\mu > \mu_2$,  the diameter $D_{\sqrt \mu, L}$ of $\Omega_{\sqrt \mu, L}$ is bounded by
  \begin{equation}
    \label{eq:diameter-bounds}
  \arcosh (1+2(\tan L)^2)  \leq D_{\sqrt \mu, L} \leq \arcosh (1+2(\tan L)^2) + \pi/\sqrt{\mu_2}.
  \end{equation}
\end{prop}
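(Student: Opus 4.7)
I would begin with the previous lemma, which identifies $D_{\sqrt\mu,L}=\textrm{dist}(P,R)$, and the explicit expression for $\textrm{dist}(P,R)$ computed in its proof. Using $e^{\pi/\sqrt\mu}+e^{-\pi/\sqrt\mu}=2\cosh(\pi/\sqrt\mu)$, the formula rewrites in the cleaner form
\[
D_{\sqrt\mu,L}=\arcosh\!\left(\frac{\cosh(\pi/\sqrt\mu)+\sin^2 L}{\cos^2 L}\right).
\]
From here the lower bound is immediate: since $\cosh(\pi/\sqrt\mu)\ge 1$, the argument of $\arcosh$ is at least $(1+\sin^2 L)/\cos^2 L=1+2\tan^2 L$, and monotonicity of $\arcosh$ gives $D_{\sqrt\mu,L}\ge\arcosh(1+2\tan^2 L)$.

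For the upper bound I would actually prove the slightly stronger estimate $D_{\sqrt\mu,L}\le\arcosh(1+2\tan^2 L)+\pi/\sqrt\mu$, from which the proposition follows because $\mu>\mu_2$. Writing $D_0:=\arcosh(1+2\tan^2 L)$, so that $\cosh D_0=1+2\tan^2 L$ and $\sinh D_0=2\sin L/\cos^2 L$, monotonicity of $\cosh$ reduces matters to
\[
\frac{\cosh(\pi/\sqrt\mu)+\sin^2 L}{\cos^2 L}\;\le\;\cosh D_0\,\cosh(\pi/\sqrt\mu)+\sinh D_0\,\sinh(\pi/\sqrt\mu),
\]
whose right-hand side is $\cosh(D_0+\pi/\sqrt\mu)$ by the hyperbolic addition formula. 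Multiplying through by $\cos^2 L$, substituting the values of $\cosh D_0$ and $\sinh D_0$, and cancelling the common $\cosh(\pi/\sqrt\mu)$ terms collapses this to $0\le\sin^2 L\,(\cosh(\pi/\sqrt\mu)-1)+2\sin L\,\sinh(\pi/\sqrt\mu)$, which is manifestly true for $\mu>0$.

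I do not expect any genuine obstacle; the whole argument is routine algebra. The only small insight is to compare $D_{\sqrt\mu,L}$ against the natural candidate $D_0+\pi/\sqrt\mu$ via the $\cosh$ addition formula rather than via a more direct triangle-inequality estimate. For instance, bounding $\textrm{dist}(P,R)\le\textrm{dist}(P,S)+\textrm{dist}(S,R)$ together with the arclength estimate $\textrm{dist}(S,R)\le\pi/(\sqrt\mu\cos L)$, obtained by integrating $ds=dr/(r\cos L)$ along the ray $\varphi=-L$, would only yield $D_0+\pi/(\sqrt\mu\cos L)$, which is not sharp enough when $\cos L$ is close to zero. The addition-formula route instead gives the clean bound independent of $L$.
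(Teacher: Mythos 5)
Your proof is correct, and the upper bound is obtained by a genuinely different route from the paper's. You work directly from the closed-form expression $\textrm{dist}(P,R)=\arcosh\bigl((\cosh(\pi/\sqrt\mu)+\sin^2L)/\cos^2L\bigr)$ and verify $\cosh(\textrm{dist}(P,R))\le\cosh(D_0+\pi/\sqrt\mu)$ via the hyperbolic addition formula; the algebra checks out (in particular $\sinh D_0=2\sin L/\cos^2L$ is right), and the lower bound via $\cosh(\pi/\sqrt\mu)\ge 1$ is immediate. The paper instead bounds the diameter by the length of the broken path $P\to T\to U\to R$ through the points $T=(0,1)$ and $U=(0,e^{\pi/\sqrt\mu})$ on the axis of symmetry: since $T$ lies on the geodesic semicircle through $P$ and $S$, and since $\textrm{dist}(U,R)=\textrm{dist}(T,S)$ by the scale-invariance of the half-plane metric, this path has length exactly $\textrm{dist}(P,S)+\textrm{dist}(T,U)=\arcosh(1+2\tan^2L)+\pi/\sqrt\mu$ — the same bound you obtain. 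So your closing remark is slightly too pessimistic about triangle-inequality arguments: it is only the particular path running along the ray $\varphi=-L$ (costing $\pi/(\sqrt\mu\cos L)$) that loses the factor $\cos L$; routing the excess length along the $y$-axis, where the domain is thinnest, recovers the clean $\pi/\sqrt\mu$. Your approach buys an explicit formula for $D_{\sqrt\mu,L}$ itself, which is a mild bonus; the paper's buys a coordinate-free geometric picture. Both are complete proofs.
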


\begin{proof}
The diameter is bounded below by $\textrm{dist}(S,P)$:
\begin{align*}D_{\sqrt \mu, L}&\ge \textrm{dist}(S,P) = \arcosh\left( \frac{1+(\sin L)^2}{(\cos L)^2}\right)
= \arcosh\left(1+2\left(\tan L\right)^2 \right).
\end{align*}
For the bound from above, the distance formula \eqref{eq:distance} gives that $\textrm{dist}(U,R) = \textrm{dist}(T,S)$, so we have
  \begin{align*}
    D_{\sqrt \mu, L} &  \leq \textrm{dist}(P, T)+\textrm{dist}\left( T,U \right)+ \textrm{dist}\left( U, R \right)\\
      & = \textrm{dist}(P, S) + \textrm{dist}(T,U)\\
	& = \arcosh (1+2(\tan L)^2) + \pi/\sqrt{\mu}. \qedhere
      \end{align*}
    \end{proof}


\section{Sketch of the proof of Theorem \ref{thm:main} for $n=2$}
\label{sec:approach}
With the upper and lower bounds on the diameter from Proposition \ref{prop:diameter-bounds}, to prove Theorem \ref{thm:main}, it suffices to show that given $L \in (0, \pi)$, the fundamental gap $\lambda_2(\Omega_{\sqrt \mu, L}) - \lambda_1(\Omega_{\sqrt \mu, L}) \to 0$ as $\mu \to \infty$.

The domains $\Omega_{\sqrt{\mu}, L}$ were chosen because they allow for separation of variables \cite{shih1989counterexample, BCNSWW}.  The  eigenfunctions for the Laplace operator can be obtained by $u(r,\varphi)=h(\varphi)f(r)$, with
\begin{align} \notag r^2f_{rr}+ rf_r&=-\mu f \text{ on }r \in (1, e^{\pi/\sqrt{\mu}}) \\
 \label{eq:h-equation}
 h_{\varphi\varphi}+\lambda (\sec\varphi)^{2} h&=\mu h \text{ on }\varphi\in(-L,L),
\end{align}
where $\lambda$, $f$, and $h$ all depend on $\mu$ and where $f$ and $h$ satisfy Dirichlet boundary conditions. As pointed out in our earlier paper \cite[Section 2.3]{BCNSWW},  the first eigenvalue $\lambda_1$ of \eqref{eq:h-equation} is equal to  $\lambda_1(\Omega_{\sqrt \mu, L})$, and
the second eigenvalue $\lambda_2$ of \eqref{eq:h-equation} is not necessarily equal to $\lambda_2(\Omega_{\sqrt{\mu}, L})$\footnote{It is the case that $\lambda_2 = \lambda_2(\Omega_{\sqrt{\mu}, L})$ for $\mu$ large. Lemma \ref{lem:bound from below2} and Proposition \ref{prop:bound from above} show that for $\eta \in (0, L)$ and $\mu > \mu_2$, we have
	$
	  \lambda_1^{4\mu} \geq (\cos L)^2 4 \mu > (\cos \eta)^2 \mu \geq \lambda_2^{\mu},
	$
	where $\lambda_1^{4\mu}$ is the first eigenvalue of $ h_{\varphi\varphi}+\lambda (\sec\varphi)^{2} h=4\mu h$.}, the second eigenvalue of the Laplace operator on our domain $\Omega_{\sqrt \mu, L}$, but it is certainly true that
	\[
	  \lambda_2(\Omega_{\sqrt \mu, L}) \leq \lambda_2.
      \]
      As a consequence, $\lambda_2(\Omega_{\sqrt \mu, L}) - \lambda_1(\Omega_{\sqrt \mu, L}) \leq \lambda_2 - \lambda_1$. Therefore it suffices to show that $\lambda_2 - \lambda_1 \to 0$ as $\mu \to \infty$. A large part of this paper is concerned with studying eigenvalues and eigenfunctions of \eqref{eq:h-equation}.

\subsection*{The first eigenvalue}
Note that $\lambda_1$ is not bounded as $\mu \to \infty$. If it were, we would have that $h_{\varphi\varphi} = h (\mu - \lambda_1 (\cos \varphi)^{-2}) \geq 0$ for $\mu$ large when $h$ is the first (nonnegative) eigenfunction, which contradicts the fact that $h_1$ vanishes at the boundary. The first step is to capture how fast $\lambda_1$ grows as $\mu \to \infty$. This is done in Section \ref{sec:eigenvalues}.
\subsection*{Rayleigh quotients}
The first eigenvalue is a minimum of the Rayleigh quotient
  \begin{equation}
  \label{Rayleigh}
  R[h]:= \frac{\displaystyle\int_{-L}^L (h_{\varphi})^2 + \mu h^2 \,d\varphi}{\displaystyle\int_{-L}^L h^2 (\cos\varphi)^{-2} \,d\varphi},
  \end{equation}
  over  the Sobolev space $\mathcal H = \{ h \in W^{1,2}\left( (-L, L)\right)\mid h(-L)=h(L) = 0\} $.
  \begin{figure}[htbp]
  \begin{tikzpicture}[line cap=round, line join=round, scale=.3*\MyScale]
  \draw[thick, ->] (-2.8,0) -- (2.8,0) node[below] {$\varphi$};
  \draw[line width=2pt, domain=-1.7:1.7,smooth,variable=\x,blue!60] plot ({\x},{8*2.7^(-4)*cosh(16*\x/8)});
  \draw[line width=2pt, blue!60](1.7,2.26) .. controls (2.0,3.6) .. (2.3, 0);
  \draw[line width=2pt, blue!60](-1.7,2.26) .. controls (-2.0,3.6) .. (-2.3, 0);
  \draw[thick, dotted](-1, 0) -- (-1,.5);
  \node[below] at (-1, 0) {$-\varphi_0$};
  \draw[thick, dotted] (1, 0) -- (1,.5);
  \node [below] at (1, -.05) {$\varphi_0$};
  \node [below] at (-2.3,0) {$-L$};
  \node [below] at (2.3,0) {$L$};
    \end{tikzpicture}
    \caption{Expected graph of $h_1$. The maxima move towards $-L$ and $L$ and $h_1(0) \to 0$ as $\mu \to \infty$. The constant $\varphi_0 \in (0, L/2)$ is fixed and used in Section \ref{sec:shape-of-h}.
}
\label{fig:GraphH1}
\end{figure}
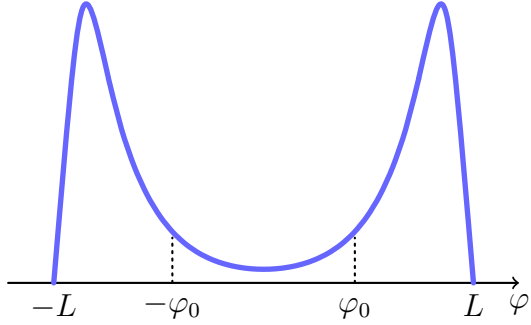

\subsection*{The first eigenfunction $h_1$}
A good grasp of $h_1$ is needed for estimating the Rayleigh quotient so in Section \ref{sec:shape-of-h}, we make precise the characteristics of the first eigenfunction: it has two maxima points as expected; as $\mu \to \infty$, the points where the maxima occur move towards $-L$ and $L$ respectively; the value $h_1(0)$ decays to zero and the first eigenfunction becomes more flat near $\varphi=0$.


\subsection*{Upper bound on the fundamental gap with Rayleigh quotients.}
For the first eigenvalue, we just take $\lambda_1=R[h_1]$. The second eigenvalue $\lambda_2$ is not computed directly, but bounded above by the Rayleigh quotient of an appropriate test function. The simplest way to obtain such a function is to multiply $h_1$ by the following odd function:  let $\psi(s)$ be the continuous piecewise linear function
  \[
    \psi =
      \begin{cases}
	1 \text{ for } \varphi < - \pone,\\
	- \varphi/\pone \text{ for } |\varphi| \leq \pone,\\
	-1 \text{ for } \varphi >  \pone,
      \end{cases}
  \]
  where $\pone = \frac{\varphi_0}{\mu}$ (see Figure \ref{fig:GraphPsi}).  The function $\psi h_1$ is odd and matches $h_1$ on $(-L_,-\pone)$ and $-h_1$ on $(\pone, L)$.

\begin{figure}[htbp]
  \begin{tikzpicture}[line cap=round,line join=round,scale=.3*\MyScale]
  \draw[thick, ->] (-2.6,0) -- (2.6,0) node[below] {$\varphi$};
  \node[above] at (-2,1) {$\psi = 1$};
  \node[below] at (2,-1) {$\psi =-1$};
  \draw[thick, dotted](-.1, -1.2) -- (-.1,1.2);
  \node[below] at (-.3, -1.2) {$-\pone$};
  \draw[thick, dotted] (.1, -1.2) -- (.1,1.2);
  \node [below] at (.25, -1.25) {$\pone$};
\draw [line width=2pt] (-2.5,1)--(-.1,1)--(.1,-1)--(2.5,-1);
    \end{tikzpicture}%
    \caption{Graph of $\psi$.}
\label{fig:GraphPsi}
\end{figure}
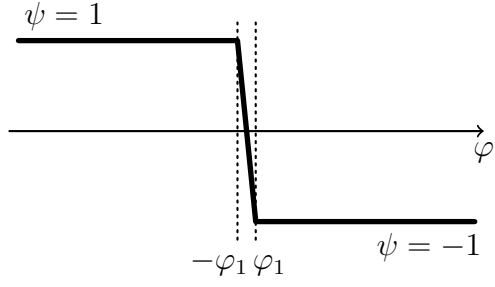

As mentioned above, we infer that
  \begin{equation}
    \lambda_2 - \lambda_1 \leq R[\psi h_1] - R[h_1].  \label{gap-est}
  \end{equation}
  Hence for estimating the fundamental gap from above,  it suffices to find an upper bound on the right-hand side of the inequality consisting of the quotients' difference. The difference is concentrated on the interval $(-\pone, \pone)$, where we have that $h_1$ and its derivatives are small. The computation is done in Section \ref{sec:Rayleigh-quotient} using estimates on $h_1$ from Section \ref{sec:shape-of-h}.

\section{The first eigenvalue}
\label{sec:eigenvalues}

Before we can prove that the first eigenfunction has the shape given in Figure \ref{fig:GraphH1}, we need estimates on the first eigenvalue.

Recall the equation for the eigenfunctions
  \begin{equation}
  \tag{\ref{eq:h-equation}}
  h_{\varphi\varphi} + (\lambda (\sec \varphi)^{2} - \mu) h = 0.
  \end{equation}
  We emphasize that the first eigenvalue $\lambda_1 = \lambda_1^{\mu}$ and the corresponding nonnegative eigenfunction $h_1=h_1^{\mu}$ both depend on $\mu$, even though it will not always be showcased in the notation.

  The bound from below is a straightforward application of Wirtinger inequality. We will use the weaker $\lambda_1 \geq (\cos L)^2 \mu$ in the rest of the article but include the stronger estimate for completeness.
Note that Lemma \ref{lem:bound from below2} gives us that $\lambda_1 \to \infty$ as $\mu \to \infty$.

  \begin{lemma}[Bound from below] \label{lem:bound from below2} $\lambda_1\ge (\cos L)^2 \left( \frac{\pi^2}{4L^2}+ \mu\right).$
  \end{lemma}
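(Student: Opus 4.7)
The first eigenvalue is the minimum of the Rayleigh quotient $R[h]$ displayed in \eqref{Rayleigh}, so to bound $\lambda_1$ from below it suffices to bound $R[h]$ below by the claimed constant for every admissible $h$. The plan is to bound the denominator of $R[h]$ from above and the numerator from below using two elementary facts: the pointwise inequality $\cos\varphi \ge \cos L$ on $(-L,L)$, and the one-dimensional Poincar\'e (Wirtinger) inequality on the interval $(-L,L)$ with Dirichlet endpoints.

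\textbf{Denominator.} Since $\cos\varphi \ge \cos L > 0$ for $|\varphi|\le L$, we have $(\cos\varphi)^{-2} \le (\cos L)^{-2}$, so
\[
\int_{-L}^{L} h^2 (\cos\varphi)^{-2}\,d\varphi \;\le\; (\cos L)^{-2}\int_{-L}^{L} h^2\,d\varphi.
\]

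\textbf{Numerator.} The first Dirichlet eigenvalue of $-\partial_\varphi^2$ on $(-L,L)$ is $\pi^2/(4L^2)$ (the corresponding eigenfunction being $\cos(\pi\varphi/(2L))$). Consequently, for every $h\in\mathcal H$,
\[
\int_{-L}^{L} (h_\varphi)^2\,d\varphi \;\ge\; \frac{\pi^2}{4L^2}\int_{-L}^{L} h^2\,d\varphi,
\]
and hence
\[
\int_{-L}^{L} (h_\varphi)^2 + \mu\, h^2\,d\varphi \;\ge\; \left(\frac{\pi^2}{4L^2}+\mu\right)\int_{-L}^{L} h^2\,d\varphi.
\]

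Combining the two estimates gives $R[h] \ge (\cos L)^2\bigl(\pi^2/(4L^2)+\mu\bigr)$ for every admissible $h$, and minimizing over $\mathcal H$ yields the claim.

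\textbf{Main obstacle.} There is essentially none: the argument is a one-line Rayleigh-quotient comparison once one notices that both the weight $(\cos\varphi)^{-2}$ in the denominator and the constant term $\mu$ in the numerator can be handled by the same trivial bound $L^2(-L,L)\hookrightarrow L^2((\cos\varphi)^{-2}d\varphi)$ and a Poincar\'e inequality, respectively. The only delicate point worth mentioning is the sharp constant $\pi^2/(4L^2)$ in the Wirtinger inequality for an interval of length $2L$, which is standard.
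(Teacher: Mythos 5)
Your proof is correct and follows essentially the same route as the paper's: both bound the numerator of the Rayleigh quotient from below via the Wirtinger/Poincar\'e inequality with constant $\pi^2/(4L^2)$ and the denominator from above via $(\sec\varphi)^2\le(\sec L)^2$. No issues.
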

  \begin{proof}
    From the characterization of the first eigenvalue through the Rayleigh quotient \eqref{Rayleigh} on the Sobolev space $\mathcal H = \{ h \in W^{1,2}\left( (-L, L)\right)\mid h(-L)=h(L) = 0\} $, we have
  \begin{align*}
    \lambda_1^{\mu}&= \inf_{h\in \mathcal{H}}    \frac{\displaystyle\int_{-L}^{L} \left( {h}_\varphi   \right)^2  + \mu {h}^2 \,d\varphi}{\displaystyle\int_{-L}^{L} {h}^2  (\sec\varphi)^2 \,d\varphi} \ge  \inf_{h\in \mathcal{H}}    \frac{\displaystyle\int_{-L}^{L}  \left(\tfrac{\pi}{2L}\right)^{2}  h^2+  \mu {h}^2 \,d\varphi} {\displaystyle\int_{-L}^{L} {h}^2  (\sec\varphi)^2 \,d\varphi}\\
&\ge  \inf_{h\in \mathcal{H}}   \frac{ \left(\left(\tfrac{\pi}{2L}\right)^{2}+\mu\right) \displaystyle\int_{-L}^{L}    h^2 \,d\varphi} {(\sec L)^2\displaystyle\int_{-L}^{L} {h}^2  \,d\varphi}
=  (\cos L)^2\left(\left(\tfrac{\pi}{2L}\right)^{2}+\mu\right). \qedhere
  \end{align*}
  \end{proof}

  We now control the rate of growth $\lambda_1 = \lambda_1^{\mu}$ from above.

   \begin{prop}\label{prop:bound from above}
    For every $\eta \in (0,L)$, there exists a $\mu_2$ such that $\mu>\mu_2$ implies
      \begin{equation}
	\label{mu-bound-on-lambda}
	\mu (\cos \eta)^2 \geq \lambda_1.
      \end{equation}
  \end{prop}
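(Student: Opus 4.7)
The plan is to establish the upper bound on $\lambda_1$ via the Rayleigh quotient characterization \eqref{Rayleigh}, by exhibiting a test function concentrated near the boundary $\varphi = L$, where the weight $(\sec\varphi)^2$ is largest. This is consistent with the expected shape of the first eigenfunction sketched in Figure \ref{fig:GraphH1}: as $\mu$ grows, $h_1$ should pile up near $\pm L$, and the value of $\lambda_1/\mu$ should drop from the a priori bound $1$ towards the lower bound $(\cos L)^2$ given by Lemma \ref{lem:bound from below2}.

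Concretely, pick any intermediate $\tilde\eta \in (\eta, L)$ (for instance $\tilde\eta = (L+\eta)/2$), and define $h \in \mathcal{H}$ by
\[
h(\varphi) \;=\; \begin{cases} \sin\!\bigl(\pi(\varphi - \tilde\eta)/(L - \tilde\eta)\bigr), & \varphi \in [\tilde\eta, L],\\ 0, & \varphi \in [-L,\tilde\eta].\end{cases}
\]
This $h$ is continuous, vanishes at $\pm L$, and lies in $W^{1,2}(-L,L)$. A routine computation gives $\int_{-L}^L h_\varphi^2\, d\varphi = \pi^2/[2(L-\tilde\eta)]$ and $\int_{-L}^L h^2\, d\varphi = (L-\tilde\eta)/2$. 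Since $(\sec\varphi)^2$ is monotone increasing on $[0,\pi/2)$, on the support of $h$ we have $(\sec\varphi)^2 \geq (\sec\tilde\eta)^2$, and therefore $\int_{-L}^L h^2 (\sec\varphi)^2\, d\varphi \geq (\sec\tilde\eta)^2 (L-\tilde\eta)/2$. Substituting into \eqref{Rayleigh} yields
\[
\lambda_1 \;\leq\; R[h] \;\leq\; (\cos\tilde\eta)^2\left[\frac{\pi^2}{(L-\tilde\eta)^2} + \mu\right].
\]

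Because $\tilde\eta > \eta$, the strict inequality $(\cos\tilde\eta)^2 < (\cos\eta)^2$ holds, so the right-hand side is bounded by $\mu(\cos\eta)^2$ as soon as
\[
\mu \;\geq\; \frac{(\cos\tilde\eta)^2\, \pi^2}{(L-\tilde\eta)^2\,\bigl[(\cos\eta)^2 - (\cos\tilde\eta)^2\bigr]} \;=:\; \mu_2,
\]
which gives the desired threshold. There is no serious obstacle here: the one delicate point is choosing $\tilde\eta$ strictly between $\eta$ and $L$, so that the $O(1)$ slack $(\cos\eta)^2 - (\cos\tilde\eta)^2$ is genuinely positive and can absorb the $\mu$-independent Dirichlet penalty $\pi^2/(L-\tilde\eta)^2$ of the narrow bump as $\mu \to \infty$.
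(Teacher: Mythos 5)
Your proof is correct, but it takes a genuinely different route from the paper. The paper argues by contradiction: assuming $\lambda_1^{\mu_k}\ge \mu_k(\cos\eta)^2$ along a sequence $\mu_k\to\infty$, it shows the potential $\lambda_1(\sec\varphi)^2-\mu$ in \eqref{eq:h-equation} becomes large and positive on the subinterval $\left(-L,-\tfrac{L+\eta}{2}\right)$, so Sturm's comparison theorem forces the first eigenfunction to vanish there, contradicting its positivity; this uses the lower bound of Lemma \ref{lem:bound from below2} to guarantee $\lambda_1^{\mu_k}\to\infty$. You instead bound $\lambda_1$ directly from above by plugging an explicit test function, a sine bump supported on $[\tilde\eta,L]$ with $\tilde\eta=(L+\eta)/2$, into the Rayleigh quotient \eqref{Rayleigh}; your computations of $\int h_\varphi^2$, $\int h^2$, and the lower bound on the weighted denominator are all correct, and the resulting estimate $\lambda_1\le(\cos\tilde\eta)^2\bigl[\pi^2/(L-\tilde\eta)^2+\mu\bigr]$ yields the claim once $\mu$ exceeds the explicit threshold you compute. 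Your approach is more elementary (no Sturm comparison, no contradiction) and quantitative: it produces an explicit $\mu_2$, and by letting $\tilde\eta\to L$ it reproves Corollary \ref{cor:quotient-lambda-mu} directly. The paper's ODE argument, on the other hand, yields as a by-product the localization of the inflection points used in Corollary \ref{cor:monotonicity}, which is why the authors set it up that way. Either proof is acceptable here.
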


  \begin{proof}
    We argue by contradiction and assume that there is an $\eta \in (0, L)$ and a sequence $\mu_k \to \infty$ so that
    \[
      \frac{\lambda_1^{\mu_k}}{\mu_k} \geq (\cos \eta)^2.
    \]

    For those $\mu$'s and corresponding $\lambda$'s,
    \begin{align*}
      \lambda_1^{\mu_k} (\cos \varphi)^{-2}  - \mu_k
      & \geq  \lambda_1^{\mu_k} \left((\cos \varphi)^{-2} - (\cos \eta)^{-2}\right).
    \end{align*}
    For $\varphi \in \left(-L, -\tfrac{L+\eta}{2}\right)$, the coefficient of $\lambda_1^{\mu_k}$ is positive, bounded away from zero. Taking $\mu_k$ and therefore $\lambda_1^{\mu_k}$ large enough, we can make the right-hand side larger than $\tfrac{\pi^2}{4(L-\eta)^2}$. For these large $\mu_k$'s, Sturm's Comparison Theorem applied to \eqref{eq:h-equation} and $h_{\varphi\varphi} + \tfrac{\pi^2}{4(L-\eta)^2} h =0$ would imply that $h_1^{\mu_k}$ has a zero in $\left(-L,-\tfrac{L+\eta}{2}\right)$. This contradicts the fact that the first eigenfunction is positive in $(-L, L)$.
  \end{proof}

  Combining Lemma \ref{lem:bound from below2} and Proposition \ref{prop:bound from above}, we get the following asymptotic behavior for the ratio of $\lambda_1$ and $\mu$:
  \begin{cor}
    \label{cor:quotient-lambda-mu}
    $\frac{\lambda_1}{\mu} \searrow (\cos L)^2$ as $\mu \to \infty$.
    \end{cor}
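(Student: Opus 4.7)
The plan is to deduce the corollary by combining the two-sided bounds just established and adding one small monotonicity observation.

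First, the lower bound. The previous lemma gives $\lambda_1 \ge (\cos L)^2 \left(\tfrac{\pi^2}{4L^2} + \mu\right)$, so dividing by $\mu$ yields
\[
  \frac{\lambda_1^\mu}{\mu} \;\ge\; (\cos L)^2 + \frac{(\cos L)^2 \pi^2}{4L^2 \mu},
\]
and hence $\liminf_{\mu \to \infty} \lambda_1^\mu/\mu \ge (\cos L)^2$. For the upper bound, Proposition~\ref{prop:bound from above} says that for each $\eta \in (0,L)$ there is a $\mu_2 = \mu_2(\eta)$ with $\lambda_1^\mu/\mu \le (\cos \eta)^2$ for $\mu > \mu_2$. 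Therefore $\limsup_{\mu \to \infty} \lambda_1^\mu/\mu \le (\cos \eta)^2$, and letting $\eta \nearrow L$ gives $\limsup_{\mu \to \infty} \lambda_1^\mu/\mu \le (\cos L)^2$. Combining the two bounds, $\lim_{\mu \to \infty} \lambda_1^\mu/\mu = (\cos L)^2$.

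It remains to justify the monotone decrease indicated by the symbol $\searrow$. This follows from the variational characterization. Dividing the Rayleigh quotient \eqref{Rayleigh} by $\mu$, one obtains
\[
  \frac{\lambda_1^\mu}{\mu} \;=\; \inf_{h \in \mathcal{H}} \frac{\displaystyle\int_{-L}^{L}\!\tfrac{1}{\mu}(h_\varphi)^2 + h^2\,d\varphi}{\displaystyle\int_{-L}^{L} h^2 (\sec\varphi)^2\,d\varphi}.
\]
For each fixed $h \in \mathcal{H}$, the right-hand integrand in the numerator is a (strictly) decreasing function of $\mu$, so the quotient on the right decreases in $\mu$. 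The infimum of a family of pointwise-decreasing functions of $\mu$ is itself decreasing in $\mu$, hence $\mu \mapsto \lambda_1^\mu/\mu$ is monotonically decreasing. Together with the limit computed above, this gives $\lambda_1^\mu/\mu \searrow (\cos L)^2$ as $\mu \to \infty$.

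There is no real obstacle here: the work was in establishing Lemma~\ref{lem:bound from below2} and Proposition~\ref{prop:bound from above}, and the only slightly new ingredient is the simple observation that dividing the Rayleigh quotient by $\mu$ turns it into an infimum of decreasing-in-$\mu$ functionals, which immediately yields monotonicity.
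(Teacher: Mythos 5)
Your proof is correct and, for the limit itself, is exactly the argument the paper intends: the corollary is stated with no written proof beyond ``combining Lemma~\ref{lem:bound from below2} and Proposition~\ref{prop:bound from above}'', which is your $\liminf$/$\limsup$ argument with $\eta \nearrow L$. Your additional observation---that dividing the Rayleigh quotient \eqref{Rayleigh} by $\mu$ exhibits $\lambda_1^\mu/\mu$ as an infimum of functionals that are decreasing in $\mu$, hence is itself decreasing---is a genuine (small) extra that the paper leaves implicit in the symbol $\searrow$, and it is valid; if you want strict decrease, note that the infimum is attained at $h_1^{\mu}$, so comparing $\tilde R_{\mu_2}[h_1^{\mu_2}]$ with $\tilde R_{\mu_1}[h_1^{\mu_2}]$ gives strictness.
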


\section{The shape of $h_1$}
\label{sec:shape-of-h}

In this section, we show that the first eigenfunction behaves as claimed in Figure \ref{fig:GraphH1} and obtain estimates for the rate at which $h_1 (0)$ tends to zero. This is done first by an integral estimate, then a pointwise estimate, then an improved integral estimate, then finally an integral estimate on the derivative.

  The first eigenfunction of \eqref{eq:h-equation} is even because all the coefficients of \eqref{eq:h-equation} are even. We also assume that $h_1 >0$ on $(-L,L)$ and is normalized so that $\int_{-L}^L h_1^2 (\sec \varphi)^2 d\varphi =1$.

  The first eigenfunction has two inflection points $\pm \pIP$ situated where $(\cos \pIP)^2 = \lambda_1/\mu$. From Corollary \ref{cor:quotient-lambda-mu}, we know that those inflection points exist (i.e. the equation is satisfied in $(-L,L)$) and that $\pIP \to L$ as $\mu \to \infty$. Going to Proposition \ref{prop:bound from above}, we can describe the behavior a little better:
  \begin{cor}
    \label{cor:monotonicity}
    For any $\eta \in (0, L)$, there is a positive constant $\mu_2$ so that the inflection points for $h_1^{\mu}$ are outside of the interval $(-\eta, \eta)$ whenever $\mu > \mu_2$. Therefore the maxima of $h_1$ are at points outside of the interval $(-\eta, \eta)$ and the function $h_1$ is increasing on $(0, \eta)$.
  \end{cor}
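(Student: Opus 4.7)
The plan is to read off the statement directly from Proposition \ref{prop:bound from above} together with a sign analysis of $h_{\varphi\varphi}$ coming from the ODE \eqref{eq:h-equation}. The inflection points of $h_1$ are defined by $(\cos\pIP)^2 = \lambda_1/\mu$, so controlling the location of $\pIP$ is the same as controlling the ratio $\lambda_1/\mu$.

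First, given $\eta \in (0,L)$, I would pick any $\eta' \in (0,\eta)$ and apply Proposition \ref{prop:bound from above} with $\eta'$ in place of $\eta$. This yields a threshold $\mu_2$ such that for all $\mu > \mu_2$,
\[
  \frac{\lambda_1}{\mu} \le (\cos \eta')^2 < (\cos \eta)^2.
\]
Since $(\cos\pIP)^2 = \lambda_1/\mu$ and $\cos$ is strictly decreasing on $(0,\pi/2)$, this forces $\pIP > \eta$, so both inflection points lie outside $(-\eta,\eta)$, proving the first assertion.

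Next, from equation \eqref{eq:h-equation} rewritten as $h_{\varphi\varphi} = (\mu - \lambda_1 (\sec\varphi)^2)\, h_1$, one has $h_{\varphi\varphi} > 0$ precisely on the set where $(\cos\varphi)^2 > \lambda_1/\mu$, i.e.\ on $(-\pIP,\pIP)$, because $h_1 > 0$ on $(-L,L)$. Hence $h_1$ is strictly convex on $(-\pIP,\pIP) \supset (-\eta,\eta)$. Since $h_1$ is even, $h_1'(0) = 0$, and the strict convexity then forces $h_1$ to be strictly increasing on $(0,\pIP)$, and in particular on $(0,\eta)$.

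Finally, for the location of the maxima: $h_1$ is positive on $(-L,L)$ and vanishes at $\pm L$, so a maximum exists, and by evenness the set of maxima is symmetric. On $(-\pIP,\pIP)$ the function is strictly convex with $h_1'(0)=0$, so no maximum can lie in this interval; thus the maxima belong to $[-L,-\pIP]\cup[\pIP,L]\subset [-L,-\eta]\cup[\eta,L]$, as claimed. I do not anticipate a serious obstacle: the only input beyond the ODE is the quantitative upper bound on $\lambda_1/\mu$ from Proposition \ref{prop:bound from above}, which has already been established.
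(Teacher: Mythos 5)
Your proof is correct and follows essentially the same route as the paper: the location of the inflection points is read off from Proposition \ref{prop:bound from above} via $(\cos \pIP)^2 = \lambda_1/\mu \le (\cos\eta)^2$, and the monotonicity and location of the maxima follow from the sign of $h_{\varphi\varphi}$ on $(-\pIP,\pIP)$ together with $h_1'(0)=0$. Your explicit sign analysis (convexity of $h_1$ between the inflection points) is exactly what the paper's one-line justification intends, and your use of an auxiliary $\eta'<\eta$ to get a strict inequality is a harmless refinement.
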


  The last property is a consequence of the concavity of $h_1$ and the fact that $h_1'(0) =0$. If we seek integral bounds on some fixed interval $(-\varphi_0,\varphi_0)$, we can take $\eta \in (\varphi_0, L)$. With the corresponding $\mu_2$ from Proposition \ref{prop:bound from above}, we have that $h_1^{\mu}$ is increasing on $(0, \varphi_0)$ for all $\mu > \mu_2$. 

\subsection{A uniform integral bound on a subinterval}

\begin{lemma}
  \label{lem:integral-bound-h1}
  Given $\varphi_0 \in (0, L)$, there exists a function $b(\mu, \varphi_0)$ with $b(\mu, \varphi_0) \to 0$ as $\mu \to \infty$ so that
  \begin{equation}
    \label{eq:l2-bound-on-subinterval}
    \int_{-\varphi_0}^{\varphi_0} h_1^2 (\sec \varphi)^2  d\varphi < b(\mu, \varphi_0),
  \end{equation}
      where the first eigenfunction is normalized so that $\int_{-L}^L   h_1^2 (\sec \varphi)^2  d\varphi =1$.
\end{lemma}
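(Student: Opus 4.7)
The plan is to localize the mass of $h_1$ away from $(-\varphi_0,\varphi_0)$ by exploiting the fact that $h_1$ grows at exponential rate $\sim\sqrt{\mu}$ just outside $\varphi_0$, while remaining bounded by $h_1(\varphi_0)$ on $(-\varphi_0,\varphi_0)$ thanks to the monotonicity from Corollary~\ref{cor:monotonicity}. Concretely, I would introduce an auxiliary cutoff $\eta\in(\varphi_0,L)$ and compare $h_1$ on $(\varphi_0,\eta)$ with a $\cosh$ solution of comparable growth; feeding the resulting lower bound into the normalization then forces $h_1(\varphi_0)$ itself to decay exponentially in $\sqrt{\mu}$.

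In more detail, fix $\eta\in(\varphi_0,L)$ and set $\alpha^2=1-(\cos L/\cos\eta)^2>0$. Corollary~\ref{cor:quotient-lambda-mu} gives $\lambda_1/\mu\to(\cos L)^2$, so for all sufficiently large $\mu$,
\[
  \mu-\lambda_1\sec^2\eta \,\geq\, \tfrac{1}{2}\mu\alpha^2.
\]
Since $\sec^2\varphi\leq\sec^2\eta$ on $(\varphi_0,\eta)$, equation~\eqref{eq:h-equation} yields $h_1''\geq \tfrac{1}{2}\mu\alpha^2\, h_1$ throughout this interval, and Corollary~\ref{cor:monotonicity} (applied with $\eta$) gives that $h_1$ is increasing on $(0,\eta)$, so $h_1'(\varphi_0)\geq 0$. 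Comparing $h_1$ with
\[
  v(\varphi):=h_1(\varphi_0)\cosh\!\bigl(\sqrt{\mu/2}\,\alpha\,(\varphi-\varphi_0)\bigr),
\]
which satisfies the corresponding ODE with equality and matches $h_1$ at $\varphi_0$ with $v'(\varphi_0)=0\leq h_1'(\varphi_0)$, the Wronskian $W=h_1 v'-v h_1'$ satisfies $W(\varphi_0)\leq 0$ and $W'=h_1 v''-v h_1''\leq 0$, so $(v/h_1)'=W/h_1^2\leq 0$ and hence $h_1\geq v$ on $(\varphi_0,\eta)$.

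From the normalization together with $\sec^2\varphi\geq 1$ and the evenness of $h_1$,
\[
  1 \,=\, \int_{-L}^{L} h_1^2\sec^2\varphi\,d\varphi \,\geq\, 2\int_{\varphi_0}^{\eta} v^2\,d\varphi \,\geq\, h_1(\varphi_0)^2\,\frac{e^{\sqrt{2\mu}\,\alpha(\eta-\varphi_0)}}{C\sqrt{\mu}}
\]
for some constant $C=C(\eta,\alpha)$ and all $\mu$ large, which yields the decay $h_1(\varphi_0)^2\leq C\sqrt{\mu}\,e^{-\sqrt{2\mu}\,\alpha(\eta-\varphi_0)}$. Since $h_1$ is increasing on $(0,\varphi_0)$ and even, $h_1\leq h_1(\varphi_0)$ on $(-\varphi_0,\varphi_0)$, and $\sec^2\varphi\leq\sec^2\varphi_0$ there, so
\[
  \int_{-\varphi_0}^{\varphi_0} h_1^2\sec^2\varphi\,d\varphi \,\leq\, 2\varphi_0\sec^2\varphi_0\cdot h_1(\varphi_0)^2 \,=:\,b(\mu,\varphi_0) \,\to\, 0.
\]
The only subtle point is that one really needs $\eta<L$ strictly in order to keep $\alpha^2>0$; one cannot take $\eta=L$ directly, which is precisely why the auxiliary cutoff is necessary. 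Everything else (the bound on $\int\cosh^2$, the Wronskian manipulation, applying Corollary~\ref{cor:monotonicity}) is routine once the setup is in place.
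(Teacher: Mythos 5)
Your proof is correct, but it takes a genuinely different route from the paper's. The paper's argument is a three-line variational one: since $\lambda_1=R[h_1]$ with the normalization $\int_{-L}^{L}h_1^2(\sec\varphi)^2\,d\varphi=1$, one writes $\lambda_1>\mu\int_{-L}^{L}h_1^2\,d\varphi$, bounds $h_1^2\ge (\cos L)^2h_1^2(\sec\varphi)^2$ outside $(-\varphi_0,\varphi_0)$ and $h_1^2\ge(\cos\varphi_0)^2h_1^2(\sec\varphi)^2$ inside, and reads off
\[
\int_{-\varphi_0}^{\varphi_0}h_1^2(\sec\varphi)^2\,d\varphi<\frac{\lambda_1/\mu-(\cos L)^2}{(\cos\varphi_0)^2-(\cos L)^2}=:b(\mu,\varphi_0),
\]
which tends to $0$ by Corollary \ref{cor:quotient-lambda-mu}. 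You instead prove a pointwise statement --- that $h_1(\varphi_0)$ itself decays exponentially in $\sqrt{\mu}$ --- via a Wronskian/Sturm comparison on $(\varphi_0,\eta)$, and then integrate trivially using the monotonicity from Corollary \ref{cor:monotonicity}. Your comparison step is sound ($W(\varphi_0)\le 0$, $W'\le 0$, hence $h_1\ge v$), and all the ingredients you invoke (Corollaries \ref{cor:quotient-lambda-mu} and \ref{cor:monotonicity}) are established before this lemma, so there is no circularity. In effect you have front-loaded the Sturm comparison that the paper only deploys in the following subsection to bound $h_1(0)$; the trade-off is that your argument is longer and leans on the ODE structure and monotonicity, whereas the paper's needs only the Rayleigh quotient, but your conclusion is quantitatively stronger (an exponentially small $b$ rather than one decaying at the unquantified rate of $\lambda_1/\mu-(\cos L)^2$). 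One minor downstream remark: the paper later uses that its explicit $b(\mu,\varphi_0)$ is bounded uniformly for $\varphi_0\in(0,L/2)$; with your $b$ this is also easy to arrange (e.g.\ replace $b$ by $\min\{b,1\}$, legitimate since the integral is at most $1$ by the normalization), but it is worth noting if your proof were substituted into the paper.
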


\begin{proof}
  We first give an estimate of $\lambda_1$ from below. By \eqref{Rayleigh} and the normalization of $h_1$, we have
  \begin{align*}
    \lambda_1 &=R[h_1]  > \mu \int_{-L}^L h_1^2 d\varphi \\
    & \geq 2 \mu (\cos L)^2 \int_{\varphi_0}^L h_1^2 (\sec \varphi)^2 d\varphi + 2 \mu (\cos \varphi_0)^2 \int_0^{\varphi_0} h_1^2 (\sec \varphi)^2 d\varphi\\
    & = \mu (\cos L)^2 + \mu \left( (\cos \varphi_0)^2 - (\cos L)^2 \right) \int_{-\varphi_0}^{\varphi_0} h_1^2 (\sec \varphi)^2 d\varphi.
  \end{align*}
 Hence \begin{equation*}
    \frac{ \lambda_1}{\mu}  -(\cos L)^2 > \left( (\cos \varphi_0)^2 - (\cos L)^2 \right) \int_{-\varphi_0}^{\varphi_0} h_1^2 (\sec \varphi)^2 d\varphi.
  \end{equation*}
  Simply set $b(\mu, \varphi_0)= \frac{\lambda_1 / \mu - (\cos L)^2 }{ (\cos \varphi_0)^2 - (\cos L)^2 }$. The fact that $b(\mu, \varphi_0) \to 0$ as $\mu \to \infty$ follows from Corollary \ref{cor:quotient-lambda-mu}.
\end{proof}

\subsection{A pointwise lower and upper bound on $h_1$ near $0$}
\label{ssec:bound-h(0)}

From now on, $\varphi_0$ is a constant in $(0, L/2)$.

We use the following Sturm comparison for Jacobi equations to obtain a lower bound for the first eigenfunction  $h_1$ near $0$.

  \begin{theorem}[Sturm Comparison Theorem]   For $i=1,2$, let $f_i$ satisfy
$${f_i}'' + b_i f_i=0 \text{ on }(0,l),$$
and $f_1(0)=f_2(0)>0$,  $f'_i(0)=0$.     Suppose that $b_1\ge b_2$ and $f_1 >0$ on $(0,l)$.   Then $f_1\le f_2$ on $(0,l)$.
If $f_1=f_2$ at $t_1 \in (0,l)$, then $b_1 \equiv b_2$ on $(0,t_1)$.
  \end{theorem}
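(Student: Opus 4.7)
The plan is to analyze the Wronskian $W := f_1 f_2' - f_1' f_2$. A direct computation using ${f_i}'' = -b_i f_i$ gives
\[
W' = f_1 f_2'' - f_1'' f_2 = (b_1 - b_2)\, f_1 f_2,
\]
while the initial conditions $f_i'(0)=0$ and $f_1(0)=f_2(0)$ give $W(0)=0$. Thus $W \ge 0$ on any subinterval of $(0,l)$ where $f_1$ and $f_2$ have the same sign.

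First I would rule out sign changes of $f_2$. Let $t^* := \inf\{t \in (0,l) : f_2(t) = 0\}$, with $t^* = l$ if $f_2$ has no zero in $(0,l)$. On $(0, t^*)$ both $f_1$ and $f_2$ are positive, so $W \ge 0$, and hence
\[
\left( \frac{f_2}{f_1} \right)' = \frac{W}{f_1^2} \ge 0;
\]
that is, $q := f_2/f_1$ is non-decreasing on $(0, t^*)$. Since $f_1$ and $f_2$ are continuous at $t=0$ with $f_1(0) = f_2(0) > 0$, the quotient $q$ extends continuously to $t=0$ with $q(0) = 1$. Combined with the monotonicity, this forces $q \ge 1$, i.e.\ $f_2 \ge f_1 > 0$, throughout $(0, t^*)$. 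If $t^* < l$, continuity at $t^*$ would give $f_2(t^*) \ge f_1(t^*) > 0$, contradicting the definition of $t^*$. Hence $f_2 > 0$ on $(0,l)$, and $f_1 \le f_2$ there.

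For the rigidity statement, suppose $f_1(t_1) = f_2(t_1)$ for some $t_1 \in (0,l)$. Then $q(t_1) = 1$, but $q$ is non-decreasing on $(0, t_1)$ with $q(0) = 1$, so $q \equiv 1$ on $(0, t_1)$. Consequently $W \equiv 0$ on that interval, and therefore $W' = (b_1 - b_2) f_1 f_2 \equiv 0$ there. Since $f_1, f_2 > 0$, this forces $b_1 \equiv b_2$ on $(0, t_1)$.

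The whole argument is an elementary Wronskian manipulation. The step requiring slight care is ensuring $f_2$ does not vanish inside $(0,l)$; this is where the hypothesis $f_1 > 0$ on $(0,l)$ is crucially used, since otherwise $f_2$ could in principle reach zero before $f_1$ and the comparison would have to be restricted to a shorter interval. This is the main potential obstacle, and it is absorbed cleanly into the $t^*$ bootstrap above.
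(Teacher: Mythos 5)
Your proof is correct. Note that the paper itself does not actually prove this statement: it simply cites do Carmo for the version with the dual initial conditions $f_1(0)=f_2(0)=0$, $f_1'(0)=f_2'(0)>0$ and asserts that the same argument carries over. Your Wronskian argument is the standard one and is complete and self-contained: the identity $W'=(b_1-b_2)f_1f_2$ with $W(0)=0$, the monotonicity of $q=f_2/f_1$ via $q'=W/f_1^2$, and the bootstrap showing $f_2$ cannot vanish before $f_1$ all check out, as does the rigidity step ($q\equiv 1$ forces $W\equiv 0$, hence $W'\equiv 0$, hence $b_1\equiv b_2$ where $f_1f_2>0$). One phrasing to tighten: ``$W\ge 0$ on any subinterval where $f_1$ and $f_2$ have the same sign'' is only justified for subintervals with left endpoint $0$, since the conclusion comes from integrating $W'$ from the initial point; in your application the interval is $(0,t^*)$, so nothing breaks, but the general claim as stated is not what you use. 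What your write-up buys over the paper's citation is an explicit, elementary verification that the initial conditions $f_i'(0)=0$ (rather than $f_i(0)=0$) cause no difficulty, which is exactly the point the paper waves at with ``clearly, the same proof works.''
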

  \begin{proof}
    The theorem is well known. For example, it is stated in \cite[Page 238-239]{doCarmo}  for the initial conditions  $f_1(0)=f_2(0)=0$,  $f'_1(0)= f_2'(0) >0$. Clearly, the same proof works for the above dual initial conditions.
\end{proof}

 Recall once again that $h_1$ satisfies the Jacobi equation:
 \begin{equation}
  \tag{\ref{eq:h-equation}}
  h_{\varphi\varphi} + (\lambda (\sec \varphi)^{2} - \mu) h = 0.
  \end{equation}with $h_1'(0) =0$.
Since,   by Proposition \ref{prop:bound from above}, $\lambda_1 \le \mu(\cos(2\varphi_0))^2$ for all $\mu$ sufficiently large, and on $(-\varphi_0,\varphi_0)$, $(\sec\varphi)^2 \le (\sec \varphi_0 )^2$, we have
\[
  \lambda_1 (\sec\varphi)^2-\mu \le  \mu\frac{(\cos(2\varphi_0))^2}{(\cos\varphi_0)^2} - \mu
=-c_1\mu,
\]
where we set $c_1= 1 -\left( \frac{\cos(2 \varphi_0)}{\cos \varphi_0)} \right)^2$. Remark that $c_1>0$.

Let $\bar{h}(\varphi) = h_1(0)\cosh(\sqrt{\mu c_1}\varphi)$.  Then $\bar{h}$ satisfies the Jacobi equation \[
 h_{\varphi\varphi} - c_1 \mu h =0\]
with $\bar{h}'(0) =0$ and $\bar{h}(0) = h_1(0).$

By the Sturm Comparison Theorem above, we have $h_1(\varphi)\ge \bar{h}(\varphi)$ on $[0, \varphi_0)$, thereby on $(-\varphi_0, \varphi_0)$ because both functions are even.

We formulate this as a lemma below.
\begin{lemma}   Fix $\varphi_0  \in (0,L/2)$.  Then for all $\mu$ sufficiently large,
	\begin{equation}
	\label{eq:h0 from above}
	h_1(\varphi)\ge h_1(0)\cosh(\sqrt{\mu c_1}\varphi),
	\end{equation}
	for all $|\varphi|<\varphi_0$. Here $c_1= 1 -\left( \frac{\cos(2 \varphi_0)}{\cos \varphi_0} \right)^2$.
\end{lemma}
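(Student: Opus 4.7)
The plan is to compare $h_1$ directly to an explicit subsolution of a constant-coefficient Jacobi equation on $(-\varphi_0,\varphi_0)$ and invoke the Sturm Comparison Theorem that was just quoted. Since both $h_1$ and any $\cosh(\sqrt{\mu c_1}\,\cdot)$ are even functions, it is enough to work on $[0,\varphi_0)$ and reflect the conclusion.

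The first step is to turn the coefficient $\lambda_1(\sec\varphi)^2-\mu$ in \eqref{eq:h-equation} into something that is bounded above by $-c_1\mu$ on $[0,\varphi_0)$. The hypothesis $\varphi_0<L/2$ allows us to apply Proposition \ref{prop:bound from above} with $\eta=2\varphi_0\in(0,L)$: for all $\mu>\mu_2$ one has $\lambda_1\le\mu(\cos(2\varphi_0))^2$. Combining this with the pointwise estimate $(\sec\varphi)^2\le(\sec\varphi_0)^2$ on $[0,\varphi_0)$ gives
\[
\lambda_1(\sec\varphi)^2-\mu\;\le\;\mu\,\frac{(\cos(2\varphi_0))^2}{(\cos\varphi_0)^2}-\mu\;=\;-c_1\mu,
\]
with $c_1=1-\bigl(\cos(2\varphi_0)/\cos\varphi_0\bigr)^2>0$ (strict positivity follows from $2\varphi_0<L<\pi/2$).

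The second step is to introduce the candidate barrier $\bar h(\varphi)=h_1(0)\cosh(\sqrt{\mu c_1}\,\varphi)$. A direct differentiation shows $\bar h_{\varphi\varphi}-c_1\mu\,\bar h=0$, and the initial conditions $\bar h(0)=h_1(0)$, $\bar h'(0)=0$ match those of $h_1$ at the origin (the latter because $h_1$ is even, as noted at the start of Section \ref{sec:shape-of-h}).

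The third step is the Sturm comparison. Writing both ODEs in the form $f''+bf=0$, the coefficient for $\bar h$ is $b_{\bar h}=-c_1\mu$ and the coefficient for $h_1$ is $b_{h_1}(\varphi)=\lambda_1(\sec\varphi)^2-\mu$; by Step 1, $b_{\bar h}\ge b_{h_1}$ on $[0,\varphi_0)$. Since $\bar h>0$ everywhere, the hypotheses of the Sturm Comparison Theorem are satisfied with $f_1=\bar h$, $f_2=h_1$, and we conclude $\bar h\le h_1$ on $[0,\varphi_0)$. Evenness of both functions extends the inequality to $(-\varphi_0,\varphi_0)$, which is exactly \eqref{eq:h0 from above}. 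The only step that is not purely mechanical is the quantitative upper bound $\lambda_1\le\mu(\cos(2\varphi_0))^2$, but that is already in hand via Proposition \ref{prop:bound from above}; everything else is a straightforward assembly. The sole soft point is verifying $h_1>0$ on the comparison interval, which is automatic because $h_1$ is the (positive) first Dirichlet eigenfunction on the larger interval $(-L,L)$.
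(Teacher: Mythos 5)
Your proposal is correct and follows essentially the same route as the paper: the bound $\lambda_1\le\mu(\cos(2\varphi_0))^2$ from Proposition \ref{prop:bound from above} combined with $(\sec\varphi)^2\le(\sec\varphi_0)^2$ gives the coefficient bound $-c_1\mu$, and the Sturm comparison with $\bar h(\varphi)=h_1(0)\cosh(\sqrt{\mu c_1}\,\varphi)$ plus evenness yields \eqref{eq:h0 from above}. No gaps.
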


Similarly, using the lower bound for $\lambda_1$ (Lemma~\ref{lem:bound from below2}) and $(\sec\varphi)^2 \ge 1$, we have
\[
\lambda_1 (\sec\varphi)^2-\mu \ge  (\cos L)^2 \left( \frac{\pi^2}{4L^2} +\mu \right)  - \mu \ge \mu \left[(\cos L)^2 -1 \right]
=- (\sin L)^2\mu,
\]
therefore
  \begin{equation}
  \label{eq:h0 from below}
h_1(\varphi) \le h_1(0) \cosh (\sqrt{\mu} \sin L \varphi).
  \end{equation}
  This last estimate is used to improve the integral bound. In the meantime, \eqref{eq:h0 from above} allows us to estimate $h_1(0)$.

\begin{lemma}
  Fix $\varphi_0  \in (0,L/2)$. Then for all $\mu$ sufficiently large depending on $\varphi_0$ and $L$, we have
  that
  \begin{equation}
    \label{eq:h0-bound}
    h_1(0)^2 \leq 4 b(\mu, \varphi_0) \exp(-\sqrt{\mu c_1} \varphi_0/2) \le C  \exp(-\sqrt{\mu c_1} \varphi_0/2),
  \end{equation}
  where $c_1 = 1 - \left(\frac{\cos (2 \varphi_0)}{\cos \varphi_0}\right)$, $b(\mu, \varphi_0)$ is the function from Lemma \ref{lem:integral-bound-h1}, and $C$ is a positive constant independent of $\varphi_0$ and $\mu$.
\end{lemma}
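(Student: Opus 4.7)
The plan is to combine the pointwise lower bound \eqref{eq:h0 from above} with the integral bound from Lemma~\ref{lem:integral-bound-h1} and solve for $h_1(0)^2$. Since $h_1(\varphi)\ge h_1(0)\cosh(\sqrt{\mu c_1}\,\varphi)$ on $(-\varphi_0,\varphi_0)$ and $(\sec\varphi)^2\ge 1$, we get
\[
b(\mu,\varphi_0)\;>\;\int_{-\varphi_0}^{\varphi_0} h_1^{\,2}(\sec\varphi)^2\,d\varphi\;\ge\; h_1(0)^{2}\int_{-\varphi_0}^{\varphi_0}\cosh^{2}\!\bigl(\sqrt{\mu c_1}\,\varphi\bigr)\,d\varphi.
\]
The remaining job is to bound the $\cosh^{2}$ integral from below by a term of the form $(\text{const})\,\exp(\sqrt{\mu c_1}\,\varphi_0/2)$, which after rearrangement produces the claimed estimate.

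To carry this out I would either (i) restrict integration to the smaller subinterval $[\varphi_0/2,\varphi_0]$ and use $\cosh(x)\ge \tfrac12 e^{x}$ for $x\ge 0$, giving
\[
\int_{\varphi_0/2}^{\varphi_0}\cosh^{2}\!\bigl(\sqrt{\mu c_1}\,\varphi\bigr)\,d\varphi \;\ge\; \frac{\varphi_0}{8}\exp\!\bigl(\sqrt{\mu c_1}\,\varphi_0\bigr),
\]
or, equivalently, (ii) evaluate $\int_{-\varphi_0}^{\varphi_0}\cosh^{2}(\sqrt{\mu c_1}\varphi)\,d\varphi = \varphi_0 + \sinh(2\sqrt{\mu c_1}\varphi_0)/(2\sqrt{\mu c_1})$ and extract the dominant exponential. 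Either way, for $\mu$ large enough (depending only on $\varphi_0$ and $L$) the right-hand side dominates $\tfrac{1}{4}\exp(\sqrt{\mu c_1}\,\varphi_0/2)$, and we obtain
\[
h_1(0)^{2}\;\le\; 4\,b(\mu,\varphi_0)\,\exp\!\bigl(-\sqrt{\mu c_1}\,\varphi_0/2\bigr),
\]
which is the first inequality in \eqref{eq:h0-bound}. (The stated constant $4$ is generous because the actual decay rate is faster; any constant suffices for the remainder of the paper.)

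For the second inequality, observe that $b(\mu,\varphi_0)\to 0$ as $\mu\to\infty$ by Lemma~\ref{lem:integral-bound-h1}, and in particular $b(\mu,\varphi_0)$ is bounded above by some absolute constant (independent of $\varphi_0$ and $\mu$) once $\mu$ is taken sufficiently large. Setting $C$ to be four times this bound yields $h_1(0)^{2}\le C\exp(-\sqrt{\mu c_1}\,\varphi_0/2)$, completing the proof. The step that requires the most care is lining up the dependencies: the pointwise bound \eqref{eq:h0 from above} and the smallness of $b(\mu,\varphi_0)$ both require $\mu$ large depending on $\varphi_0$ and $L$ (via Proposition~\ref{prop:bound from above} and Corollary~\ref{cor:quotient-lambda-mu}); there is no genuine analytic obstacle, only bookkeeping of these thresholds.
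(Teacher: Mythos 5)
Your proposal is correct and follows essentially the same route as the paper: insert the pointwise lower bound \eqref{eq:h0 from above} into the integral bound \eqref{eq:l2-bound-on-subinterval}, bound $\cosh$ below by half the exponential, and use the halved exponent $\sqrt{\mu c_1}\,\varphi_0/2$ to absorb the leftover prefactor for $\mu$ large. The only point worth tightening is the uniformity of $C$ in $\varphi_0$: the statement that $b(\mu,\varphi_0)\to 0$ holds for each fixed $\varphi_0$, so to get a constant independent of $\varphi_0$ one should appeal (as the paper does) to the explicit formula $b(\mu,\varphi_0)=\frac{\lambda_1/\mu-(\cos L)^2}{(\cos\varphi_0)^2-(\cos L)^2}$, whose denominator is bounded below for $\varphi_0\in(0,L/2)$.
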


\label{b-discussion}
From the explicit form of $b(\mu, \varphi_0)$ in the proof of Lemma \ref{lem:integral-bound-h1}, we see that $b(\mu, \varphi_0)$ is bounded above uniformly for $\varphi_0 \in (0, L/2)$.

\begin{proof}
  Inserting \eqref{eq:h0 from above} into \eqref{eq:l2-bound-on-subinterval}, we obtain a rough estimate on $h(0)$ for $\mu$ large as follows
  \begin{align*}
    b(\mu, s_0) &
    \geq \int_{-\varphi_0}^{\varphi_0} h_1^2 (\sec \varphi)^2 d\varphi \geq 2\int_{0}^{\varphi_0} h_1^2 d\varphi \geq 2 \int_{0}^{\varphi_0} h_1(0)^2 (\cosh (\sqrt{\mu c_1} \varphi))^2 d\varphi \\
    & \geq \frac{1}{2}  \int_{0}^{\varphi_0} h_1(0)^2 \exp({2\sqrt{\mu c_1} \varphi}) d\varphi  = \frac{1}{4} h_1(0)^2 \frac{\exp(2 \sqrt{\mu c_1} \varphi_0)-1}{\sqrt{\mu c_1}}\\
    & \geq
    \frac{1}{4} h_1(0)^2 \frac{\exp(\sqrt{\mu c_1} \varphi_0)}{\sqrt{\mu c_1}},
   \end{align*}
   where we have used, in the first line, the even property of $h_1$. 
%
  The smaller coefficient for $\varphi_0$ in the exponential in \eqref{eq:h0-bound} is to compensate for the powers of $\mu$ outside of the exponential.  \end{proof}

\subsection{An improved integral bound}

\begin{lemma} For $\varphi_1 := \varphi_0/\mu$, we have
	\begin{align}
	\label{lastpartofI}
	\lim_{\mu \to \infty} \mu^2 \int_{-\varphi_1}^{\varphi_1} h_1^2 d \varphi = 0.
	\end{align}
\end{lemma}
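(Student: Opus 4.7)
The plan is to combine the two pointwise bounds already in hand, noting that $\varphi_1 = \varphi_0/\mu$ is a very short interval on which the Taylor-type bound for $h_1$ is essentially flat, while the prefactor $h_1(0)^2$ carries exponential decay from \eqref{eq:h0-bound}. The exponential decay will trivially beat the polynomial factor $\mu^2$.

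First, I would use the pointwise upper bound \eqref{eq:h0 from below},
\[
h_1(\varphi)^2 \le h_1(0)^2 \cosh^2\bigl(\sqrt{\mu}\,\sin L\,\varphi\bigr),
\]
valid for $\varphi \in (-L,L)$, hence in particular on $(-\varphi_1,\varphi_1)$. For $|\varphi|\le \varphi_1 = \varphi_0/\mu$ we have $\sqrt{\mu}\,\sin L\,|\varphi| \le (\sin L)\varphi_0/\sqrt{\mu}$, which tends to $0$ as $\mu\to\infty$; consequently $\cosh^2(\sqrt{\mu}\sin L\,\varphi) \le 2$ for all $\mu$ sufficiently large (depending only on $L$ and $\varphi_0$). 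Integrating over the interval of length $2\varphi_1 = 2\varphi_0/\mu$ gives
\[
\int_{-\varphi_1}^{\varphi_1} h_1^2 \,d\varphi \;\le\; 2\cdot \frac{2\varphi_0}{\mu}\cdot h_1(0)^2 \;=\; \frac{4\varphi_0}{\mu}\, h_1(0)^2.
\]

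Next, I would invoke the pointwise bound \eqref{eq:h0-bound}, $h_1(0)^2 \le C\exp\bigl(-\sqrt{\mu c_1}\,\varphi_0/2\bigr)$ with $c_1>0$. Multiplying the previous display by $\mu^2$ yields
\[
\mu^2 \int_{-\varphi_1}^{\varphi_1} h_1^2 \,d\varphi \;\le\; 4\varphi_0\, C\, \mu\, \exp\bigl(-\sqrt{\mu c_1}\,\varphi_0/2\bigr).
\]
Since $\sqrt{\mu c_1}\,\varphi_0/2 \to \infty$ and the exponential decay dominates the $\mu$ prefactor, the right-hand side tends to zero as $\mu \to \infty$, proving \eqref{lastpartofI}.

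There is no real obstacle here: the only thing one needs to notice is the precise scaling of $\varphi_1$ with $\mu$. The shrinking factor $1/\mu$ from the length of the interval combined with the $\cosh^2$ being $O(1)$ on this scale reduces everything to the exponential smallness of $h_1(0)^2$, which already comes from the earlier Sturm comparison argument. Every ingredient has been set up in the preceding lemmas.
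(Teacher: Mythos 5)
Your proof is correct and follows essentially the same route as the paper: both combine the pointwise upper bound \eqref{eq:h0 from below} with the exponential decay of $h_1(0)^2$ from \eqref{eq:h0-bound}, the only cosmetic difference being that you bound $\cosh^2(\sqrt{\mu}\sin L\,\varphi)$ by $2$ on the short interval while the paper integrates it exactly.
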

\begin{proof}	
	Using now the upperbound \eqref{eq:h0-bound} on $h_1(0)$ in \eqref{eq:h0 from below} we obtain for $|\varphi| < \varphi_1$
	\begin{align*}
	h_1^2(\varphi)  \leq C  \exp(-\sqrt{\mu c_1} \varphi_0/2) \cosh^2 (\sqrt{\mu} \sin L \varphi).
	\end{align*}
	Then
	\begin{align*}
	\mu^2 \int_{-\varphi_1}^{\varphi_1} h_1^2 d \varphi & \leq
	2 \mu^2 C  \exp(-\sqrt{\mu c_1} \varphi_0/2)  \int_{0}^{\varphi_1} \cosh^2 (\sqrt{\mu} \sin L \varphi) d \varphi \\
	& =  C \mu^2 \exp(-\sqrt{\mu c_1} \varphi_0/2) \left[ \frac{\varphi_0}{\mu} +   \frac{e^{2 \sin L \frac{\varphi_0}{\sqrt{\mu}}}-e^{-2 \sin L \frac{\varphi_0}{\sqrt{\mu}}}}{4 \sqrt{\mu} \sin L} \right].
	\end{align*}
Since $C$ is independent of $\mu$, the last term goes to zero as $\mu \to \infty$.
\end{proof}


\subsection{An integral estimate on the derivative of $h_1$}
Using the bound for the first eigenvalue in Lemma~\ref{lem:bound from below2} we obtain a bound on $h_1'$ for $\varphi$ small.

\begin{lemma}
  For $\pone :=\varphi_0/\mu$, we have
    \begin{equation}
      \label{eq:integral-derivative-estimate}
      \lim_{\mu \to \infty} \int_{-\pone}^{\pone} (h_1')^2 d\varphi =0.
    \end{equation}
  \end{lemma}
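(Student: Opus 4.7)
The plan is to apply integration by parts to relate $\int(h_1')^2$ to integrals and boundary values of $h_1$. Multiplying the ODE $h_{\varphi\varphi} = (\mu - \lambda_1\sec^2\varphi) h$ by $h_1$ and integrating on $(-\pone, \pone)$, then using the evenness of $h_1$ (which gives $h_1(-\pone) = h_1(\pone)$ and $h_1'(-\pone) = -h_1'(\pone)$), I obtain
\begin{equation*}
\int_{-\pone}^{\pone} (h_1')^2\, d\varphi = 2\, h_1(\pone)\, h_1'(\pone) \;-\; \int_{-\pone}^{\pone}(\mu - \lambda_1\sec^2\varphi)\, h_1^2\, d\varphi.
\end{equation*}
It then suffices to show that each of the two terms on the right tends to zero as $\mu \to \infty$.

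For the integral term, Lemma~\ref{lem:bound from below2} provides $\lambda_1 \geq (\cos L)^2 \mu$, and since $\sec^2 \varphi \geq 1$ on the interval this yields the uniform bound $|\mu - \lambda_1 \sec^2\varphi| \leq \mu \sin^2 L$ on $(-\pone, \pone)$. Consequently,
\begin{equation*}
\left|\int_{-\pone}^{\pone}(\mu - \lambda_1\sec^2\varphi) h_1^2\, d\varphi\right| \;\leq\; \frac{\sin^2 L}{\mu} \cdot \mu^2 \int_{-\pone}^{\pone} h_1^2\, d\varphi,
\end{equation*}
which tends to zero by the improved integral bound \eqref{lastpartofI}.

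The boundary term requires more care, since $h_1'(\pone)$ is a pointwise value without a priori control. Here I would exploit $h_1'(0) = 0$ to write $h_1'(\pone) = \int_0^{\pone} h_1''(\varphi)\, d\varphi$, and then apply Cauchy--Schwarz together with the same uniform bound $|h_1''| = |(\mu - \lambda_1 \sec^2\varphi) h_1| \leq \mu \sin^2 L \cdot h_1$:
\begin{equation*}
|h_1'(\pone)|^2 \;\leq\; \pone \int_0^{\pone} (h_1'')^2\, d\varphi \;\leq\; \frac{\varphi_0 \sin^4 L}{\mu} \cdot \mu^2 \int_0^{\pone} h_1^2\, d\varphi \;\longrightarrow\; 0,
\end{equation*}
again by \eqref{lastpartofI}. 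The pointwise estimate \eqref{eq:h0 from below} evaluated at $\varphi = \pone = \varphi_0/\mu$ gives $h_1(\pone) \leq h_1(0) \cosh(\sin L \cdot \varphi_0/\sqrt{\mu})$, which stays bounded (in fact, via \eqref{eq:h0-bound}, tends to zero). So $h_1(\pone) h_1'(\pone) \to 0$ and the conclusion follows.

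The main obstacle is exactly this boundary term: integration by parts unavoidably produces the pointwise quantity $h_1(\pone) h_1'(\pone)$, whereas the earlier estimates only yield mean-square control of $h_1$. The key observation is that the symmetry condition $h_1'(0) = 0$ lets us recast $h_1'(\pone)$ as an integral of $h_1''$, which the ODE expresses as a multiple of $h_1$, and this reduces the boundary term to one further application of the improved integral bound \eqref{lastpartofI}.
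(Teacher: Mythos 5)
Your argument is correct, and it takes a genuinely different route from the paper's. The paper avoids integration by parts entirely: it writes $h_1'(\varphi)=\int_0^{\varphi}h_1''\,dt$, converts $h_1''$ via the ODE into $(\mu-\lambda_1\sec^2 t)h_1$, bounds the coefficient by $\mu\sin^2L$ using Lemma \ref{lem:bound from below2}, and then uses the monotonicity of $h_1'$ on $(0,\pone)$ (from Corollary \ref{cor:monotonicity}) plus Cauchy--Schwarz to get $(h_1'(\pone))^2\le \mu^2\sin^4L\,\pone\int_0^{\pone}h_1^2$, after which \eqref{lastpartofI} finishes. Your integration-by-parts identity is a valid alternative, and your treatment of the boundary term $h_1(\pone)h_1'(\pone)$ is essentially the paper's pointwise estimate in disguise; in fact, once you have that pointwise bound $|h_1'(\varphi)|^2\le\pone\int_0^{\pone}(h_1'')^2$ for every $\varphi\in(0,\pone)$, you could integrate it directly over $(-\pone,\pone)$ and skip the integration by parts altogether, so your route is a slight detour rather than a simplification. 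One small point to tighten: the inequality $\lambda_1\ge(\cos L)^2\mu$ together with $\sec^2\varphi\ge1$ only gives the one-sided bound $\mu-\lambda_1\sec^2\varphi\le\mu\sin^2L$, not the two-sided bound $|\mu-\lambda_1\sec^2\varphi|\le\mu\sin^2L$ that you invoke twice. To control the other side you should either note that for $\mu$ large the coefficient $\mu-\lambda_1\sec^2\varphi$ is nonnegative on $(-\pone,\pone)$ (the inflection points lie outside any fixed $(-\eta,\eta)$ by Corollary \ref{cor:monotonicity}), or use Proposition \ref{prop:bound from above} to bound $\lambda_1\sec^2\varphi\le\mu\cos^2\eta\,\sec^2\pone\le\mu(1+\sin^2L)$ for $\mu$ large. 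Either fix is one line, and the rest of your estimates, including the use of \eqref{eq:h0 from below} and \eqref{eq:h0-bound} to control $h_1(\pone)$, go through.
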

  \begin{proof}
    Choose $\mu_2$ large  such that $h_1$ is increasing and convex on $(0, \varphi_0)$ for $\mu > \mu_2$ (see Corollary \ref{cor:monotonicity}). We first estimate $h_1'(\varphi)$ for $\varphi \in (0, \pone)$. From equation \eqref{eq:h-equation} and the fact that $h_1'(0)=0$, we have
  \begin{align*}
      h_1'(\varphi) & = \int_{0}^{\varphi} h_1''(t) dt = \int_0^{\varphi} \left( \mu - \lambda_1 (\sec t)^2 \right) h_1(t) dt\\
      & \leq \int_0^{\varphi} \left( \mu - \mu (\cos L)^2 (\sec t)^2  \right)h_1(t) dt  \leq \mu (\sin L)^2 \int_0^{\varphi} h_1(t)dt,
  \end{align*}
  where we used the lower bound on $\lambda_1$ from Lemma \ref{lem:bound from below2}. Since  $h_1'$ is increasing on $(0, \pone)$, the Cauchy-Schwartz inequality then implies
  \[
    \left(h_1'(\varphi_1)\right)^2 \leq  \mu^2 (\sin L)^4 \varphi_1 \int_0^{\pone} h_1^2 d\varphi.
  \]
  And the right-hand side goes to zero as $\mu \to \infty$ by \eqref{lastpartofI}.
  \end{proof}


\section{Estimating the Rayleigh quotient difference}
\label{sec:Rayleigh-quotient}

In the beginning of Section \ref{sec:approach}, we argued that Theorem \ref{thm:main} is a corollary of the bounds on the diameter \eqref{eq:diameter-bounds} and the following proposition.

\begin{theorem}
  \label{thm:FG-h-equation}
Given the equation
  \begin{equation}
  \tag{\ref{eq:h-equation}}
  h_{\varphi\varphi} + (\lambda (\sec \varphi)^{2} - \mu) h = 0, \quad \varphi \in (-L,L)
  \end{equation}
  with zero Dirichlet boundary conditions, the difference between the first and second eigenvalues satisfies
  \[
    \lambda_2 - \lambda_1 \to 0, \textrm{ as } \mu \to \infty.
  \]
\end{theorem}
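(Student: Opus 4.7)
The plan is to execute precisely the strategy outlined in Section \ref{sec:approach}: bound $\lambda_2 - \lambda_1$ above by the Rayleigh quotient difference $R[\psi h_1] - R[h_1]$ and show that this difference vanishes as $\mu \to \infty$. Since $h_1$ is even and $\psi h_1$ is odd, we have $\int_{-L}^L \psi h_1 \cdot h_1 \cdot (\sec\varphi)^2 \, d\varphi = 0$, so $\psi h_1$ is orthogonal to $h_1$ in the weighted $L^2$ inner product. By the variational characterization of $\lambda_2$, this gives $\lambda_2 \leq R[\psi h_1]$, and since $R[h_1] = \lambda_1$, the key inequality \eqref{gap-est} follows.

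The next step is the algebraic identity
\[
R[\psi h_1] - R[h_1] = \frac{N[\psi h_1] - N[h_1]}{D[\psi h_1]} - \lambda_1 \cdot \frac{D[\psi h_1] - D[h_1]}{D[\psi h_1]},
\]
where $N[h]$ and $D[h]$ denote the numerator and denominator of \eqref{Rayleigh}. I normalize $h_1$ so that $D[h_1]=1$, which makes $N[h_1]=\lambda_1$. Since $\psi^2 = 1$ and $\psi' = 0$ outside $(-\pone,\pone)$, both differences $N[\psi h_1]-N[h_1]$ and $D[\psi h_1]-D[h_1]$ localize to the small interval $(-\pone,\pone)$, where the estimates of Section \ref{sec:shape-of-h} can be brought to bear.

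For the numerator I expand $(\psi' h_1 + \psi h_1')^2 - (h_1')^2$ and collect terms to obtain
\[
N[\psi h_1] - N[h_1] = \int_{-\pone}^{\pone} (\psi')^2 h_1^2 + 2 \psi \psi' h_1 h_1' + (\psi^2 - 1)(h_1')^2 + \mu (\psi^2-1) h_1^2 \, d\varphi.
\]
On $(-\pone,\pone)$ we have $|\psi| \leq 1$ and $|\psi'| = \mu/\varphi_0$. The first term is controlled by $(\mu/\varphi_0)^2 \int_{-\pone}^{\pone} h_1^2 d\varphi$, which vanishes by \eqref{lastpartofI}; the cross term is handled by Cauchy--Schwarz combined with \eqref{lastpartofI} and \eqref{eq:integral-derivative-estimate}; the third term is dominated by $\int_{-\pone}^{\pone} (h_1')^2 d\varphi$ and vanishes by \eqref{eq:integral-derivative-estimate}; and the last term is at most $\mu \int_{-\pone}^{\pone} h_1^2 d\varphi$, which vanishes (a fortiori) from \eqref{lastpartofI}. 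Since $D[\psi h_1] \to 1$, the first summand in the algebraic identity tends to zero.

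The main obstacle is the second summand, because $\lambda_1$ grows like $\mu$ (Corollary \ref{cor:quotient-lambda-mu}), so the denominator difference must decay strictly faster than $1/\mu$. Here the improved integral bound \eqref{lastpartofI} is essential: I estimate
\[
|D[\psi h_1] - D[h_1]| \leq \int_{-\pone}^{\pone} h_1^2 (\sec\varphi)^2 \, d\varphi \leq (\sec \pone)^2 \int_{-\pone}^{\pone} h_1^2 \, d\varphi,
\]
and since $\pone = \varphi_0/\mu \to 0$ the factor $(\sec\pone)^2$ is bounded, while \eqref{lastpartofI} yields $\mu \int_{-\pone}^{\pone} h_1^2 d\varphi = o(1/\mu) \to 0$. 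Hence $\lambda_1 |D[\psi h_1] - D[h_1]| \to 0$, and combining everything gives $\lambda_2 - \lambda_1 \leq R[\psi h_1] - R[h_1] \to 0$, completing the proof.
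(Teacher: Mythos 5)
Your proposal is correct and follows essentially the same route as the paper: the same test function $\psi h_1$, the same localization of $R[\psi h_1]-R[h_1]$ to $(-\pone,\pone)$ via an algebraically identical decomposition (your $N$/$D$ splitting matches the paper's $\text{\one},\text{\two},\text{\three},\text{\four}$ terms), and the same inputs \eqref{lastpartofI} and \eqref{eq:integral-derivative-estimate}; you additionally spell out the odd/even orthogonality justifying \eqref{gap-est}, which the paper only asserts. No gaps.
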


\begin{proof}
  From inequality \eqref{gap-est} in Section \ref{sec:approach}, it suffices to show that $R[\psi h_1] - R[h_1] \to 0$ as $\mu \to \infty$ where $\psi$ is defined in Section \ref{sec:approach}. The difference between $|\psi h_1|$ and $|h_1|$ is supported on the interval $(-\pone, \pone)$ (see Figure \ref{fig:GraphH2}).

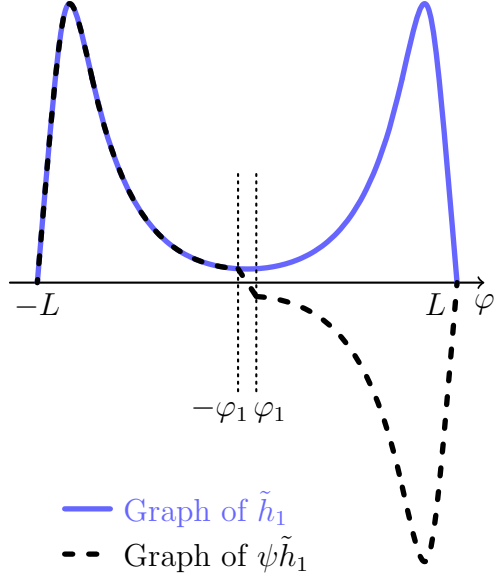
\begin{figure}[htbp]
  \begin{tikzpicture}[line cap=round, line join=round, scale=.3*\MyScale]
  \draw[thick, ->] (-2.6,0) -- (2.6,0) node[below] {$\varphi$};
  \draw[line width=2pt, domain=-1.7:1.7,smooth,variable=\x,blue!60] plot ({\x},{8*2.7^(-4)*cosh(16*\x/8)});
  \draw[line width=2pt, blue!60](1.7,2.26) .. controls (2.0,3.6) .. (2.3, 0);
  \draw[line width=2pt, blue!60](-1.7,2.26) .. controls (-2.0,3.6) .. (-2.3, 0);
  \draw[line width=2pt, domain=-1.7:-.1,loosely dashed,variable=\x,black] plot ({\x},{8*2.7^(-4)*cosh(16*\x/8)});
  \draw[line width=2pt, domain=-.1:.1,loosely  dashed,variable=\x,black] plot ({\x},{\x*(-80)*2.7^(-4)*cosh(16*\x/8)});
  \draw[line width=2pt, domain=.1:1.7,loosely  dashed,variable=\x,black] plot ({\x},{-8*2.7^(-4)*cosh(16*\x/8)});
  \draw[line width=2pt,loosely  dashed, black](1.7,-2.26) .. controls (2.0,-3.6) .. (2.3, 0);
  \draw[line width=2pt,loosely  dashed, black](-1.7,2.26) .. controls (-2.0,3.6) .. (-2.3, 0);
  \draw[thick, dotted](-.1, -1.2) -- (-.1,1.2);
  \node[below] at (-.3, -1.15) {$-\pone$};
  \draw[thick, dotted] (.1, -1.2) -- (.1,1.2);
  \node [below] at (.25, -1.2) {$\pone$};
  \node [below] at (-2.3,0) {$-L$};
  \node [below left] at (2.3,0) {$L$};
  \draw[line width=2pt, blue!60] (-2,-2.5) -- (-1.5,-2.5) node[right]{Graph of $\tilde h_1$};
  \draw[line width=2pt,loosely  dashed, black] (-2, -3) -- (-1.5, -3.0) node[right] {Graph of $\psi \tilde h_1$};
    \end{tikzpicture}
    \caption{Graphs of $h_1$ and $\psi h_1$.}
\label{fig:GraphH2}
\end{figure}

Before we start, we set the notation for the denominator of $R[\psi h_1]$
  \[
    \int_{-L}^L (\psi h_1)^2 (\sec \varphi)^2 d\varphi = 1 - \int_{-L}^L \left((h_1)^2 - (\psi h_1)^2 \right) (\sec \varphi)^2 d\varphi  = 1 - \text{\one},
  \]
  where $\text{\one} := \int_{-\pone}^{\pone}  \left((h_1)^2 - (\psi h_1)^2 \right) (\sec \varphi)^2 d\varphi$.

  The difference of the Rayleigh quotients is then
  \begin{align*}
    R[\psi h_1] - R[h_1] &= \frac{1}{1 - \text{\one}} \left\{ (1-\text{\one}) R[\psi h_1] - R[h_1] + R[h_1]\text{\one} \right\}\\
    & = \frac{1}{1-\text{\one}}\left( \text{\two} + \text{\three} + \text{\four} \right)
  \end{align*}
    where
  \begin{align*}
    \text{\two} &= \int_{-L}^{L} \left[ (\psi h_1)'^2 - h_1'^2\right] d\varphi =\int_{-\pone}^{\pone} \left[ (\psi h_1)'^2 - h_1'^2\right] d\varphi,\\
    \text{\three}&= \int_{-L}^{L} \mu\left[(\psi h_1)^2 - h_1^2 \right] d\varphi = \int_{-\pone}^{\pone} \mu\left[(\psi h_1)^2 - h_1^2 \right] d\varphi,\\
    \text{\four}&= \lambda_1\text{\one},
  \end{align*}
  because $R[h_1] = \lambda_1$. Recall that $\varphi_0 \in (0, L/2)$ and $\varphi_1 = \varphi_0/\mu$. We finish the proof by showing that A, B, C, and D  all
   go to zero as $\mu$ goes to infinity. Note that $0 <\text{\one} \leq \text{\four}$ so we can skip $\text{\one}$.

  We have
  \begin{align*}
   | \text{\two}| & \leq \int_{-\pone}^{\pone} \left(  h_1 \psi' +h_1' \psi \right)^2  d\varphi  +   \int_{-\pone}^{\pone} h_1'^2 d\varphi  \\
   &  \leq 2  \int_{-\pone}^{\pone} \left( h_1^2(\psi')^2 + (h_1')^2 \psi^2 \right)d\varphi +   \int_{-\pone}^{\pone} h_1'^2 d\varphi \\
    & \leq  2 \frac{\mu^2}{\varphi_0^2}   \int_{-\pone}^{\pone} h_1^2 d\varphi +3   \int_{-\pone}^{\pone} (h_1')^2 d\varphi.
  \end{align*}
  Both terms go to zero as $\mu \to \infty$ by \eqref{lastpartofI} and \eqref{eq:integral-derivative-estimate} respectively.

  For $\text{\three}$, note that
  \begin{align*}
   | \text{\three}|  \leq  \mu  \int_{-\pone}^{\pone}  h_1^2 d\varphi
    \end{align*}
    which tends to zero by \eqref{lastpartofI}.

    Finally, $\lambda_1 \leq \mu (\cos (L/2))^2$ for $\mu$ sufficiently large by Proposition \ref{prop:bound from above}. For such $\mu$'s, we have
  \begin{align*}
  0<  \text{\four} & \leq  \mu (\cos (L/2))^2 \int_{-\pone}^{\pone} h_1^2 (\sec \varphi)^2 d\varphi \leq  \mu \int_{-\pone}^{\pone} h_1^2 d\varphi,
  \end{align*}
  which tends to zero for $\mu \to \infty$ by \eqref{lastpartofI}. This completes the proof.
\end{proof}


\section{Heuristic argument}
\label{sec:discussion}

For a bounded, connected domain, the first eigenvalue is simple, and so the fundamental gap is always positive.    In order to understand our example, we begin by describing a simple situation in $\R^2$ where the first eigenvalue is not simple.


\begin{figure}[htbp]
\begin{tikzpicture}[line cap=round,line join=round, scale=.1*\MyScale]
\filldraw[draw=blue, fill=blue!40]
    ({-4-3*cos(186)},{3*sin(186)}) -- ({4+3*cos(186)},{3*sin(186)}) arc[start angle=186, end angle=534, radius=3] --({-4+3*cos(6)},{3*sin(6)}) arc[start angle=6, end angle=354, radius=3];
\draw [line width=2pt,color=blue] ({-4+3*cos(6)},{3*sin(6)}) arc (6:354:3);
\draw [line width=2pt,color=blue] ({4+3*cos(186)},{3*sin(186)}) arc (186:534:3);
\draw [line width=2pt, color=blue] ({-4+3*cos(6)},{3*sin(6)}) -- ({4-3*cos(6)},{3*sin(6)});
\draw [line width=2pt, color=blue] ({-4-3*cos(186)},{3*sin(186)}) -- ({4+3*cos(186)},{3*sin(186)});
\draw [line width=1pt,dash pattern=on 3pt off 3pt] (-4,0) circle (2.8);
\draw [line width=1pt,dash pattern=on 3pt off 3pt] (4,0) circle (2.8);
\end{tikzpicture}
\caption{The fundamental gap can be small if the domain has a neck.}
\end{figure}
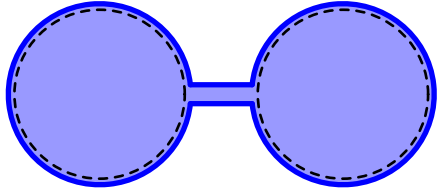

 Let $U$ be the disjoint union of two unit balls;    this domain is not convex and not connected.   The Dirichlet eigenfunctions of $U$ are given by combinations of the eigenfunctions on each ball, which are given by Bessel functions.  Let $u_i$ be the $i^{\text{th}}$ eigenfunction on the ball.    Let $\mu_i(B)$ be the $i^\text{th}$ eigenvalue on the ball.     Then the first eigenfunction for $U$ is given by two copies of $u_1$, translated to each ball.   The first eigenvalue of $U$ is $\mu_1(B)$.  The \emph{second} eigenfunction is given by a copy of $u_1$ on one ball, and a copy of $-u_1$ on the other ball.    We can see this is orthogonal to the first eigenfunction, but has the same eigenvalue:  the fundamental gap is zero.

The eigenvalues are continuous under perturbations of the domain.   Specifically, if we join the two components of $U$ by a small tube of width $\epsilon$ to create a new domain $U_\epsilon$, then $\lambda_k(U_\epsilon)\to \lambda_k(U)$ as $\epsilon\to 0$ \cite[Th 2.3.20]{henrot-book}.
On such a domain, the second eigenfunction is very close to $u_1$ on the first ball and $-u_1$ on the second ball.    In the neck joining the balls, the first and second eigenfunctions are  very small, and thus contribute very little to the Rayleigh quotient for either the first or second eigenvalue. Therefore the eigenvalues are $\epsilon$-close to those on $U$, and the fundamental gap is close to zero.


 In the case of convex domains in $\R^n$ and in $\So^n$, such dumbbell-shaped domains are excluded:  they are not convex.   However, in hyperbolic space, geodesics diverge, and thus we can find a convex domain with a narrow region separating regions of relatively large area.    These domains support eigenfunctions similar to that on the dumbbell domain described above, and therefore have very small gap.

 \begin{minipage}{0.45\textwidth}
 \begin{center}
 \begin{tikzpicture}[line cap=round,line join=round]

\draw [line width=2pt] (0,0) circle (2);
\draw [line width=2pt, dash pattern=on 3pt off 3pt] (-1,-1.73) arc(-30:50:2.38);
\draw [line width=2pt, dash pattern=on 3pt off 3pt] (1.88,0.68) arc(110:220.24:1.39);
\draw [line width=2pt, dash pattern=on 3pt off 3pt] (1.75,-0.97) arc(60.89:201.1:0.72);

%

\draw [line width=2pt, dash pattern=on 3pt off 3pt] (0.49,1.94) arc(159.31:165.81:5.5);

\begin{scope}[yscale=1,xscale=-1]
  \draw [line width=2pt, dash pattern=on 3pt off 3pt] (0.49,1.94) arc(159.31:165.81:5.5);
\end{scope}
\begin{scope}[yscale=-1,xscale=-1]
  \draw [line width=2pt, dash pattern=on 3pt off 3pt] (0.49,1.94) arc(159.31:165.81:5.5);
\end{scope}
\begin{scope}[yscale=-1,xscale=1]
  \draw [line width=2pt, dash pattern=on 3pt off 3pt] (0.49,1.94) arc(159.31:165.81:5.5);
\end{scope}
\fill[draw=blue, fill=blue!40]
(0.29,1.35) arc(82.19:97.81:2.15) arc(14.19:-14.19:5.5) arc(262.19:277.81:2.15) arc(194.19:165.81:5.5);
   \draw [line width=2pt,color=blue]  (0.29, 1.35) arc(82.19:97.81:2.15) node[left] {\small$Q$};
   \draw [line width=2pt,color=blue] (-0.29,-1.35) node[left] {\small$R$} arc(262.19:277.81:2.15);
   \draw [color=blue] (0.27,-1.35) node[right] {\small$S$};
   \draw [color=blue] (0.27,1.35) node[right] {\small$P$};
   \draw [line width=2pt,color=blue] (-0.29,-1.35)  arc(-14.19:14.19:5.5);
   \draw [line width=2pt,color=blue] (0.29,1.35) arc(165.81:194.19:5.5);
\end{tikzpicture}
\center{\small{Our domain in the Poincar\'e disc model of $\Ho^2$.} }
\end{center}
 \end{minipage}
\begin{minipage}{0.05\textwidth}  $\quad$
\end{minipage}
  \begin{minipage}{0.45\textwidth}
The picture of the two balls is not entirely accurate in our case because the size of the neck of our domains is not arbitrarily small comparing to the distance from $R$ to $S$ as seen in \eqref{eq:neck-size}. Nevertheless, the presence of a neck of shrinking width allows for the vanishing fundamental gap.
%
  \end{minipage}


\section{Higher dimensions}
\label{sec:higher-dimensions}
\spacing{1.2}

In this section we generalise the above result to higher dimensions. It is a computation in coordinates. We have included minute details for ease of understanding. The coordinates are standard spherical coordinates (unlike the nonstandard coordinates $(r,\varphi)$ in the rest of the article).

\subsection{Coordinates and the Laplace operator in coordinates.}
Let us recall the $n$-dimensional spherical coordinates $(r, \omega_2, \omega_3, \ldots, \omega_n)$:
  \begin{align*}
  x_1 &= r \cos \omega_2\\
  x_2 &= r \sin \omega_2 \cos \omega_3 \\
  x_3 &= r \sin \omega_2 \sin \omega_3 \cos \omega_4 \\
  \vdots & \\
  x_{n-1} &= r \sin \omega_2 \sin \omega_3 \cdots \sin \omega_{n-1} \cos \omega_{n}\\
  x_{n} &= r \sin   \omega_2 \sin \omega_3 \cdots \sin \omega_{n-1} \sin \omega_{n}.
  \end{align*}
  The metric $g_{ij}$ in these coordinates is given by $g_{ij} = 0$ for $i \ne j$ and
  \begin{align*}
    g_{11} & = 1\\
    g_{22} & = r^2 \\
    g_{33} & = r^2 (\sin \omega_2)^2\\
    g_{ii} & = r^2 (\sin \omega_2)^2  \cdots (\sin \omega_{i-1})^2, \quad \text{ for }i=4, \ldots, n
  \end{align*}
  The determinant of the matrix $g_{ij}$ is $g = \det(g_{ij}) = r^{2n-2} (\sin \omega_2)^{2n-4} \cdots (\sin \omega_{n-1})^{2}$.
  The Laplacian (in $\mathbb R^n$) in these coordinates is (where $\omega_1 = r$)
  \[
    \Delta_{\R^n} u = \frac{1}{\sqrt g} \sum_{i,j =1}^n \pd{}{\omega_j} \left( \sqrt g g^{ij} \pd{u}{\omega_i} \right) = \sum_{i=1}^n \frac{g^{ii}}{\sqrt g} \pd{}{\omega_i}\left( \sqrt g \pd{u}{\omega_i} \right),
  \]
  because $g_{ij}$ is diagonal and the entry $g_{ii}$ does not depend on $\omega_i$. Replacing the values of the metric and its inverse in the equation above, we get
  \begin{equation}
    \label{eq:Laplacian-n-spherical}
    \Delta_{\R^n} u = \frac{1}{r^{n-1}} \pd{}{r}\left( r^{n-1}\pd{u}{r} \right) + \sum_{i=2}^n \frac{g^{ii}}{(\sin \omega_i)^{n-i}} \pd{}{\omega_i}\left( (\sin \omega_i)^{n-i} \pd{u}{\omega_i} \right)
  \end{equation}
  \subsection{The domains}
  \label{ssec:domains}
  In order to have a well-defined and computation-suited metric, we center our domain around $\omega_i=\pi/2$. The only coordinate that is not close to zero is $x_n$ and should be the one used for the weight in the hyperbolic half-space model.

  The natural generalization of our domains $\Omega_{\sqrt \mu, L}$ is
  \begin{multline*}
  \Omega_{\sqrt \mu, \delta_2, \ldots, \delta_{n-1}, L} = \{ (r, \omega_2, \ldots, \omega_n)\mid 1<r<e^{\pi/\sqrt \mu},\\
       |\omega_i-\pi/2| <\delta_i, |\omega_n - \pi/2| < L \textrm{ for } i=2, \ldots, n-1\}
     \end{multline*}
     for $L$ and $\delta_i \in (0, \pi/2)$, $i=2,\dots,n-1$. The $\delta_i$'s don't necessarily have to be small but it is easier to picture the domains and convince oneself that the diameter is bounded independently of $\mu$.
      As in the two-dimension case, we will study the fundamental gap for $\mu \to \infty$.  

  The metric on $\mathbb H^n$ we take is $ds_{\mathbb H^n}^2 = \frac{1}{x_n^2} ds_{\R^n}^2$.

  Under a conformal change of metric given by $\tilde g = e^{2f} g$ for a smooth function $f$, the Laplacian changes as
  $
    \Delta_{\tilde g} = e^{-2f} \Delta_g - (n-2) e^{-2f} g^{ij} \frac{\partial f}{\partial x_j}\frac{\partial}{\partial x_i}.
  $ 
   In our case $f= -\ln x_n$ and $\frac{\partial f}{\partial r}=-\frac{1}{r}$, $\frac{\partial f}{\partial\omega_i} = - \frac{\cos \omega_i}{\sin \omega_i}$. The Laplacian is given by
 \[
   \Delta_{\mathbb H^n}= x_n^2 \Delta_{\R^n} + (n-2) x_n^2 \left( \frac{1}{r} \pd{ }{r}+ \frac{1}{r^2} \frac{\cos \omega_2}{\sin \omega_2} \pd{ }{\omega_2}+ \sum_{i=3}^n g^{ii} \frac{\cos \omega_i}{\sin \omega_i} \pd{ }{\omega_i} \right).
 \]
An eigenvalue-eigenfunction pair $\lambda, u$ for $\Omega$ in hyperbolic space satisfies $-\lambda u = \Delta_{\mathbb H^n} u$, which is written in coordinates as
  \begin{multline}
    \label{eq:L-in-coordinates}
    \frac{-\lambda}{r^2 (\sin \omega_2)^2 \cdots(\sin \omega_n)^2 } u =  \frac{1}{r^{n-1}} \pd{}{r}\left( r^{n-1}\pd{u}{r} \right) + (n-2) \frac{1}{r} \pd{u}{r} \\
    + \frac{1}{r^2} \frac{1}{(\sin \omega_2)^{n-2}} \pd{}{\omega_2}\left( (\sin \omega_2)^{n-2} \pd{u}{\omega_2} \right)+ (n-2) \frac{1}{r^2} \frac{\cos \omega_2}{\sin \omega_2} \pd{u}{\omega_2}\\
    + \sum_{i=3}^n \frac{1}{r^2 (\sin \omega_2)^2  \cdots (\sin \omega_{i-1})^2}\left(\frac{1}{(\sin \omega_i)^{n-i}} \pd{}{\omega_i}\left( (\sin \omega_i)^{n-i} \pd{u}{\omega_i} \right) + (n-2) \frac{\cos \omega_i}{\sin \omega_i} \pd{u}{\omega_i}\right).
  \end{multline}

  \subsection{Separation of variables}
  \label{ssec:separation-variables}
  As in the two-dimensional case, let us separate variables. We write
  \[
    u = f(r) \sff_2(\omega_2) \cdots\sff_n(\omega_n)
  \]
  then divide both sides of equation \eqref{eq:L-in-coordinates} by $ u/r^2$ to obtain
  \begin{multline}
    \label{eq:variables-separated}
    \frac{-\lambda}{(\sin \omega_2)^2 \cdots(\sin \omega_n)^2}  =\frac{1}{f r^{n-3}} \pd{}{r}\left( r^{n-1}\pd{f}{r} \right) + (n-2) \frac{r}{f} \pd{f}{r} \\
    +  \frac{1}{\sff_2(\sin \omega_2)^{n-2}} \pd{}{\omega_2}\left( (\sin \omega_2)^{n-2} \pd{\sff_2}{\omega_2} \right)+ (n-2) \frac{\cos \omega_2}{\sff_2(\sin \omega_2)} \pd{\sff_2}{\omega_2}\\
    + \sum_{i=2}^{n} \frac{1}{\sff_i(\sin \omega_2)^2  \cdots (\sin \omega_{i-1})^2}\left(\frac{1}{ (\sin \omega_i)^{n-i}} \pd{}{\omega_i}\left( (\sin \omega_i)^{n-i} \pd{\sff_i}{\omega_i} \right)+(n-2) \frac{\cos \omega_i}{\sin \omega_i} \pd{\sff_i}{\omega_i}\right)
  \end{multline}
  First, we get that the only piece depending on $r$ has to be a constant, say $-\kappa_1$. We rewrite this fact as
  \begin{equation}
    \label{eq:f-equation}
    r^{n-1}\pd{}{r}\left( r^{n-1}\pd{f}{r} \right)+(n-2) r^{n-2} \left( r^{n-1} \pd{f}{r} \right) = - \kappa_1 f r^{2n-4}.
  \end{equation}
For  $n \ge 3$, to solve the ode we change to the variable $t$ so that $\frac{dr}{dt}=-r^{n-1}$, i.e. $t=r^{2-n}/(n-2)$ and \eqref{eq:f-equation} becomes $t^2 \partial^2_{tt} f -  t \partial_t f = - \frac{1}{(n-2)^2} \kappa_1 f $.  Now let $s = \log t$, we have
  \begin{equation}
  \label{eq:f-equation-in-s}
  \partial^2_{ss} f -  2 \partial_s f = - \frac{1}{(n-2)^2} \kappa_1 f,
  \end{equation}
where $s \in ( - (n-2) \frac{\pi}{\sqrt{\mu}} -\log(n-2),\ -\log(n-2))$.

This is a linear second order differential equation whose characteristic polynomial has the roots $r_{1,2}=1\pm \sqrt{1-\frac{\kappa_1}{(n-2)^2}}$. For $f$ to satisfy the Dirichlet boundary conditions, we must have two complex conjugate roots, thus $\frac{\kappa_1}{(n-2)^2}-1=\frac{k^2 \mu}{(n-2)^2}  >0$, for $k$ non-zero integers. Furthermore, as the first eigenfunction $f$ has to be positive and satisfy the Dirichlet boundary conditions,
%
  \begin{equation}
    \label{eq:kappa1}
    \frac{\kappa_1}{(n-2)^2}-1:= \frac{\mu}{(n-2)^2}  >0.
  \end{equation}
  so $f(s)=-e^s \sin \left(\frac{\sqrt{\mu}}{n-2} (s+\log (n-2)\right)$. From here, one can get an explicit solution $f(r)$, but it is not important for the rest of the argument.

  Let us continue with our separation of variables.
   Using  \eqref{eq:f-equation}, we replace the first two terms on the right-hand side of \eqref{eq:variables-separated} by $-\kappa_1$,
  then multiply by $(\sin \omega_2)^2$ to get 
  \begin{multline*}
    \frac{-\lambda}{(\sin \omega_3)^2\cdots(\sin \omega_n)^2}  = -\kappa_1 (\sin \omega_2)^2\\
    +  \frac{1}{\sff_2(\sin \omega_2)^{n-4}} \pd{}{\omega_2}\left( (\sin \omega_2)^{n-2} \pd{\sff_2}{\omega_2} \right)+ (n-2) \frac{(\cos \omega_2)(\sin \omega_2)}{\sff_2} \pd{\sff_2}{\omega_2}\\
    + \sum_{i=2}^{n} \frac{1}{\sff_i(\sin \omega_3)^2 \cdots (\sin \omega_{i-1})^2}\left(\frac{1}{ (\sin \omega_i)^{n-i}} \pd{}{\omega_i}\left( (\sin \omega_i)^{n-i} \pd{\sff_i}{\omega_i} \right)+(n-2) \frac{\cos \omega_i}{\sin \omega_i} \pd{\sff_i}{\omega_i}\right).
  \end{multline*}
  The only piece depending on $\omega_2$ has to be constant, so
  \begin{equation}
    \label{eq:sff2-equation}
    -\kappa_1 (\sin \omega_2)^2 +  \frac{1}{\sff_2(\sin \omega_2)^{n-4}} \pd{}{\omega_2}\left( (\sin \omega_2)^{n-2} \pd{\sff_2}{\omega_2} \right) + (n-2) \frac{(\cos \omega_2)(\sin \omega_2)}{\sff_2} \pd{\sff_2}{\omega_2}= -\kappa_2
  \end{equation}
  For $i=3, \ldots, n-1$, we repeat to get
  \begin{equation}
    \label{eq:sffia-equation}
    -\kappa_{i-1} (\sin \omega_i)^2 +  \frac{1}{\sff_i(\sin \omega_i)^{n-i-2}} \pd{}{\omega_i}\left( (\sin \omega_i)^{n-i} \pd{\sff_i}{\omega_i} \right) + (n-2) \frac{(\cos \omega_i)(\sin \omega_i)}{\sff_i} \pd{\sff_i}{\omega_i}= -\kappa_i,
  \end{equation}
  where each $\kappa_i$ is a constant.
  Until the equation \eqref{eq:variables-separated} becomes
  \begin{equation}
    \label{eq:sffn-equation}
    \frac{-\lambda}{(\sin \omega_n)^2} = - \kappa_{n-1} + \frac{1}{\sff_n}\left(\pd{^2 \sff_n}{\omega_n^2} + (n-2) \frac{\cos \omega_n}{\sin \omega_n} \pd{\sff_n}{\omega_n}\right)
  \end{equation}
  which looks very close to equation \eqref{eq:h-equation}, except for the extra first degree term. Because $\omega_n$ is centered at $\pi/2$, $\sin \omega_n$ should be thought of as $\cos \varphi$. Also note that equations \eqref{eq:sff2-equation} and \eqref{eq:sffn-equation} are contained in the formulation of \eqref{eq:sffia-equation} so it suffices to study \eqref{eq:sffia-equation}.

  \subsection{Solving the ODEs}
  \label{ssec:solving-odes}

Equation \eqref{eq:sffn-equation} can be transformed in such a way that the first-order term is eliminated. It will then be similar to equation \eqref{eq:h-equation}.

  The first step is to multiply \eqref{eq:sffia-equation} by $\sff_i$, move the term with $\kappa_{i-1}$ to the right side and expand the derivative of the product to get
  \begin{equation}
    \label{eq:sffi-equation}
    (\sin \omega_i)^{2} \pd{^2 \sff_i}{\omega_i^2} + (2n-2-i) (\cos \omega_i)(\sin \omega_i) \pd{\sff_i}{\omega_i}= -\kappa_i \sff_i +\kappa_{i-1} (\sin \omega_i)^2 \sff_i,
  \end{equation}
  We look at \eqref{eq:sffi-equation} and want to combine all the derivatives as $\frac{\partial^2}{\partial \omega_i^2} ( (\sin \omega_i)^{\alpha_i} \sff_i)$. In order to do so, the exponent $\alpha_i$ should be half of the constant in front of the term $(\cos \omega_i) (\sin \omega_i) \frac{\partial \sff_i}{\partial \omega_i}$ and therefore $\alpha_i = n-1-\tfrac{i}{2}$. 
  We multiply \eqref{eq:sffi-equation} by $(\sin \omega_i)^{\alpha_i-2}$ and obtain 
\begin{multline}
  \label{eq:sffi-equation2}
  (\sin \omega_i)^{\alpha_i} \pd{^2 \sff_i}{\omega_i^2} + 2 \alpha_i (\cos \omega_i)(\sin \omega_i)^{\alpha_i-1} \pd{\sff_i}{\omega_i}= -\kappa_i (\sin \omega_i)^{\alpha_i-2} \sff_i +\kappa_{i-1} (\sin \omega_i)^{\alpha_2} \sff_i.
\end{multline}
The left-hand side is equal to
  \[
 \pd{^2}{\omega_i^2}\left( (\sin \omega_i)^{\alpha_i} \sff_i \right) - \sff_i \pd{^2}{\omega_i^2}\left( (\sin \omega_i)^{\alpha_i} \right) =
  \pd{^2 \sfh_i}{\omega_i^2} - \alpha_i (\alpha_i-1)\frac{\sfh_i}{(\sin \omega_i)^2} + \alpha_i^2 \sfh_i,
 \]
  where $\sfh_i = \sff_i (\sin \omega_i)^{\alpha_i}$.
  Putting all of this into \eqref{eq:sffi-equation2}, we get
  \begin{equation}
    \label{eq:sfhi-equation}
    \pd{^2 \sfh_i}{\omega_i} +\frac{\kappa_i- \alpha_i (\alpha_i-1)}{(\sin \omega_i)^2} \sfh_i = (\kappa_{i-1} - \alpha_i^2) \sfh_i.
  \end{equation}
  Note that \eqref{eq:sfhi-equation} has the form of \eqref{eq:h-equation} under the change of variable $\varphi = \pi/2 - \omega_i$.
  For $i=n$, the change of variables transforms \eqref{eq:sffn-equation} into
  \begin{equation}
    \label{eq:sfhn-equation}
    \pd{^2 \sfh_n}{\omega_n} +\frac{\lambda- \alpha_n (\alpha_n-1)}{(\sin \omega_n)^2} \sfh_n = (\kappa_{n-1} - \alpha_n^2) \sfh_n,
  \end{equation}
  where $\alpha_n = \frac{n}{2}-1$.

  \subsection{The proof in dimension $n\geq 3$.}
  The first eigenvalue of the Laplace operator on a domain $\Omega_{\sqrt \mu, \delta_2, \ldots, \delta_{n-1}, L}$ can be found in the following way. First, one computes the smallest $\kappa_1$ that is an eigenvalue of \eqref{eq:f-equation}. Then one repeats the process for $i=2, \ldots, n-1$: given $\kappa_{i-1}$, one takes $\kappa_i$ to be the first eigenvalue of \eqref{eq:sfhi-equation}. Finally, with the knowledge of $\kappa_{n-1}$, the first eigenvalue for the Laplacian on the domain $\lambda_1(\Omega_{\sqrt \mu, \delta_2, \ldots, \delta_{n-1}, L})$ is the first eigenvalue of equation \eqref{eq:sfhn-equation}.
  With the same $\kappa_{n-1}$, the second eigenvalue of \eqref{eq:sfhn-equation} is an eigenvalue of the Laplacian on the domain, but not necessarily the second one. Nevertheless, one can use this value as an upper bound for $\lambda_2(\Omega_{\sqrt \mu, \delta_2, \ldots, \delta_{n-1}, L})$. Therefore,  as in the two-dimension case, to prove that $\lambda_2(\Omega_{\sqrt \mu, \delta_2, \ldots, \delta_{n-1}, L}) - \lambda_1(\Omega_{\sqrt \mu, \delta_2, \ldots, \delta_{n-1}, L}) \to 0$, it suffices to show that the difference between the first two eigenvalues of \eqref{eq:sfhn-equation} go to zero. Using Theorem \ref{thm:FG-h-equation}, one just needs the fact that $\kappa_{n-1} \to \infty$ when $\mu \to \infty$.

  As $\mu \to \infty$, the first constant $\kappa_1$ goes to infinity by equation \eqref{eq:kappa1}. Then for each $i=2, \ldots, n-1$, \eqref{eq:sfhi-equation} and Lemma \ref{lem:bound from below2} gives that
  \[
    \kappa_i - \alpha_i(\alpha_i-1) \geq (\cos \delta_i)^2 (\kappa_{i-1} - \alpha_i^2),
  \]
  where the $\alpha_i$ are constant. Consequently, $\kappa_i \to \infty$ for each $i$ and applying Theorem \ref{thm:FG-h-equation} finishes the proof in higher dimensions.

\bibliographystyle{plain}
\bibliography{references}

\end{document}